\title{Residues of Terms of Lucas Sequences Modulo $3^k$}
\author{J.C. Saunders and R. Nicholas Stephens}
\theoremstyle{definition}
\newtheorem{thm}{Theorem}
\newtheorem{lemma}{Lemma}
\newtheorem{prop}{Proposition}
\newtheorem{notation}{Notation}
\newtheorem{note}{Note}
\theoremstyle{definition}
\DeclareMathOperator{\ord}{ord}
\begin{document}
\maketitle
\begin{abstract}
The Fibonacci sequence defined by $F_0=0$, $F_1=1$, and $F_n=F_{n-1}+F_{n-2}$ has a shortest period length of $4\cdot 3^{k-1}$ modulo $3^k$ for every $k\in\mathbb{N}$. In 2011, Bundschuh and Bundschuh \cite{bundschuh3} gave the frequencies of every residue $0\leq b\leq 3^k-1$ in this shortest period. In particular, their result implies that the Fibonacci sequences is not stable modulo $3$. Here we extend this result to other Lucas sequences. More specifically, we give analogous results for Lucas sequences defined by $\left(u_n\right)_n$ with $u_0=0$, $u_1=1$, and $u_n=Pu_{n-1}+u_{n-2}$ for all $n\geq 2$, as well as Lucas sequences defined by $\left(v_n\right)_n$ with $v_0=2$, $v_1=P$, and $v_n=Pv_{n-1}+v_{n-2}$ for all $n\geq 2$. In particular, our result implies that none of these Lucas sequences are stable modulo $3$ either.
\end{abstract}
\section{Introduction}
It is well-known that for any fixed positive integer $m$ that if the reduced residues of the terms of any linear recurrence sequence are taken modulo $m$, then the sequence eventually repeats \cite{ward}. Such a result prompted the study of the frequency of every possible reduced residue $0,1,2,\ldots,m-1$ in the period of any linear recurrence sequence modulo any integer $m$ and how these frequencies compare to each other. For instance, the Fibonacci sequence is defined by $F_0=0$, $F_1=1$, and $F_n=F_{n-1}+F_{n-2}$ for all $n\geq 2$. Jacobson \cite{jacobson} defined $v(m,b)$ to be the frequency of any residue $b$ modulo $m$ in a full period of the Fibonacci sequences modulo $m$ and specifically found the values of $v(2^k,b)$ for every $0\leq b\leq 2^k-1$ and every $k\in\mathbb{N}$.\par
This result prompted Carlip and Jacobson \cite{carlip} to generalize the definition of $v(m,b)$ to other Lucas sequences and define the concept of stability. For any sequence $\{u_i\}_i$ defined by $u_0=0$, $u_1=1$, and $u_i=au_{i-1}+bu_{i-2}$ for all $i\geq 2$, where $a$ and $b$ are fixed integers, they denoted the frequency of any residue $r$ modulo $m$ in the shortest period by $v_{a,b}(m,r)$ and defined
\begin{equation*}
\Omega_{a,b}(m):=\{v_{a,b}(m,r)|r\in\{0,\ldots,m-1\}\}.
\end{equation*}
They also defined the sequence being stable modulo a prime number $p$ if there exists a positive integer $k$ such that for every integer $N\geq k$, we have $\Omega(p^N)=\Omega(p^k)$. For example, Jacobson's result implies that the Fibonacci sequence is stable modulo $2$. Carlip and Jacobson also gave analogous results for these generalized sequences, for values of $a$ and $b$ satisfying certain congruence conditions modulo $16$, determining stability modulo $2$ also holds in the sequences considered. Morgan \cite{morgan} demonstrated, however, that the Lucas sequence, which is defined by $L_0=2$, $L_1=1$, and $L_n=L_{n-1}+L_{n-2}$ for all $n\geq 2$ is not stable modulo $2$ and determined all of the frequencies of every reduced residue modulo any given power of $2$.\par
The stability of linear recurrences modulo other primes has also been investigated. For instance, Carroll, Jacobson, and Somer \cite{carroll} proved that if a second-order linear recurrence sequence is defined by $x_0=0$, $x_1=1$, and $x_n=Ax_{n-1}+x_{n-2}$ for every $n\geq 2$, where $A$ is any fixed integer, then the set of frequencies of every reduced residue modulo $p^e$ is $\{0,2,4\}$, where $p^e$ is any prime power with $p>7$. In particular, every such sequence is stable for every prime $p>7$. Bundschuh and Bundschuh \cite{bundschuh2},\cite{bundschuh3} determined the frequencies of every reduced residue in the Lucas sequence modulo powers of $3$ and $5$, as well as the frequencies of every reduced residue in the Fibonacci sequences modulo all powers of $3$. The results indicated that the Lucas sequence was not stable modulo $5$ and neither sequence was stable modulo $3$. Similar results have been proven for a wide variety of other second-order linear recurrences as well (see, for instance, \cite{carlip3},\cite{carlip2},\cite{somer}).\par
Here we extend these results and determine the set of frequences of every residue modulo $3^k$ for every $k\in\mathbb{N}$ in Lucas sequences $(u_n)_n$ defined by $u_0=0$, $u_1=1$, and $u_n=Pu_{n-1}+u_{n-2}$ for every $n\geq 2$ and $(v_n)_n$ defined by $v_0=2$, $v_1=P$, and $v_n=Pv_{n-1}+v_{n-2}$ for every $n\geq 2$ for every fixed integer $P$. Our method of proof follows that of Bundschuh and Bundschuh \cite{bundschuh3}. Let
\begin{equation*}
v_{\mathcal{A}}(m,b):=\#\{n|0\leq n<h_{\mathcal{A}},a_n\equiv b\pmod m\},
\end{equation*}
where $\mathcal{A}$ is the linear recurrence in question. We divide into two cases: $3\mid P$ and $3\nmid P$. Throughout this paper, we use the following notation.
\begin{notation}
For a prime $p$ and a number $n$, we write $p^k\mathrel\Vert n$ and $\ord_p(n)=k$ if $p^k$ is the highest power of $p$ dividing $n$.
\end{notation}
\begin{thm}\label{thm1}
Let $3\nmid P$ and $3^{\delta}\mathrel\Vert P^2+2$.
\newline
\newline
1) Suppose $\delta=1$ and $k\in\mathbb{N}$. Then for every residue $b$ we have
\begin{equation}\label{thm1case1u}
v_{u}(3^k,b)=
\begin{cases}
3^{\lfloor k/2\rfloor}+2, & \text{if }b\equiv\pm u_{2\cdot 3^{2\lfloor(k-1)/4\rfloor+1}}\pmod{3^k}\\
2\cdot 3^l+2, &\text{if }b\equiv\pm u_{2\cdot 3^{l-1}}\pmod{3^{2l+1}}\text{ for some }l\in\{1,\ldots,\lfloor(k-1)/2\rfloor\}\\
2, &\text{otherwise}
\end{cases}
\end{equation}
and
\begin{equation}\label{thm1case1v}
v_{v}(3^k,b)=
\begin{cases}
3^{\lfloor k/2\rfloor}+2, & \text{if }b\equiv\pm 2\pmod{3^k}\\
2\cdot 3^l+2, &\text{if }b\equiv\pm v_{4\cdot 3^{l-1}}\pmod{3^{2l+1}}\text{ for some }l\in\{1,\ldots,\lfloor(k-1)/2\rfloor\}\\
2, &\text{otherwise.}
\end{cases}
\end{equation}
2) Suppose $\delta\geq 2$ and $k\in\mathbb{N}$, $k\geq 2\delta-1$. Then for every residue $b$ we have
\begin{equation}\label{thm1case2u}
v_{u}(3^k,b)=
\begin{cases}
3^{\lfloor k/2\rfloor}, & \text{if }b\equiv\pm u_{2\cdot 3^{2\lfloor(k-2\delta+1)/4\rfloor+1}}\pmod{3^k}\\
2\cdot 3^l, &\text{if }b\equiv\pm u_{2\cdot 3^{l-1}}\pmod{3^{2l+1}}\text{ for some }l\in\{\delta,\ldots,\lfloor(k-1)/2\rfloor\}\\
2, &\text{if }b\equiv 0,\pm 1\pmod{3^\delta}\\
0, &\text{otherwise}
\end{cases}
\end{equation}
and
\begin{equation}\label{thm1case2v}
v_{v}(3^k,b)=
\begin{cases}
3^{\lfloor k/2\rfloor}, & \text{if }b\equiv\pm 2\pmod{3^k}\\
2\cdot 3^l, &\text{if }b\equiv\pm v_{4\cdot 3^{l-1}}\pmod{3^{2l+1}}\text{ for some }l\in\{\delta,\ldots,\lfloor(k-1)/2\rfloor\}\\
2, &\text{if }b\equiv 0,\pm P\pmod{3^{\delta}}\\
0, &\text{otherwise.}
\end{cases}
\end{equation}
3) Suppose $\delta\geq 2$ and $k\in\mathbb{N}$, $\delta\leq k<2\delta-1$. Then for every residue $b$ we have
\begin{equation}\label{thm1case3u}
v_{u}(3^k,b)=
\begin{cases}
3^{k-\delta}, & \text{if }b\equiv\pm P\pmod{3^k}\\
2, &\text{if }b\equiv 0,\pm 1\pmod{3^\delta}\\
0, &\text{otherwise}
\end{cases}
\end{equation}
and
\begin{equation}\label{thm1case3v}
v_{v}(3^k,b)=
\begin{cases}
3^{k-\delta}, & \text{if }b\equiv\pm 2\pmod{3^k}\\
2, &\text{if }b\equiv 0,\pm P\pmod{3^{\delta}}\\
0, &\text{otherwise.}
\end{cases}
\end{equation}
\end{thm}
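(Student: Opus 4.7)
The plan is to adapt Bundschuh and Bundschuh's method by reducing the theorem to three ingredients: the rank of apparition of $3^k$ in $(u_n)$, an exact $3$-adic valuation formula for the individual terms, and a factorization identity that turns residue collisions into index conditions. Direct computation gives $u_1=1$, $u_2=P$, $u_3=P^2+1$, and $u_4=P(P^2+2)$. Since $3\nmid P$ and $3^\delta\mathrel\Vert P^2+2$, the rank of apparition $\alpha_u(3^\delta)$ equals $4$, and then a $3$-adic lifting argument based on the doubling identity $u_{2n}=u_n v_n$ together with $v_{2n}=v_n^2-2(-1)^n$ yields $\alpha_u(3^k)=4\cdot 3^{k-\delta}$ for all $k\geq\delta$. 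The Pell-like relation $v_n^2-(P^2+4)u_n^2=4(-1)^n$ lets one verify that the full period $h_u(3^k)$ equals $\alpha_u(3^k)$ and coincides with the period of $(v_n)$, so the counting problem takes place within an interval of length $4\cdot 3^{k-\delta}$.

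Next I would prove a $3$-adic valuation lemma: for $n\geq 1$, $\ord_3(u_n)=0$ unless $4\mid n$, in which case $\ord_3(u_{4n'})=\delta+\ord_3(n')$; and $\ord_3(v_n)=0$ unless $4\mid n$, with the divisible case controlled by $v_4\equiv -2\pmod{3^\delta}$. Both are proved by induction on $\ord_3(n)$, using the doubling formulas and the Pell relation to pass between valuations of $u_n$ and $v_n$. With this lemma in hand, the factorizations $u_n-u_m=u_{(n-m)/2}v_{(n+m)/2}$ and $u_n+u_m=u_{(n+m)/2}v_{(n-m)/2}$ (valid when $n\equiv m\pmod 2$, with parallel formulas adjusted for opposite parities) convert $u_n\equiv\pm u_m\pmod{3^k}$ into explicit divisibility conditions on $(n\pm m)/2$ that the valuation lemma resolves; the companion identities $v_n+v_m=v_{(n+m)/2}v_{(n-m)/2}$ and $v_n-v_m=(P^2+4)u_{(n+m)/2}u_{(n-m)/2}$ handle the $(v_n)$ collisions in the same way.

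The baseline contribution of $2$ in every case comes from the reflection symmetry $u_{-n}\equiv(-1)^{n+1}u_n$ and $v_{-n}\equiv(-1)^n v_n$, derived by running the recurrence backwards, which together with periodicity pairs the index $n$ with $h_u(3^k)-n$. The exceptional frequencies $3^{\lfloor k/2\rfloor}$, $2\cdot 3^l$, and $3^{k-\delta}$ arise from iterating the doubling map $n\mapsto 2n$ on those indices where the valuation lemma forces extra $3$-divisibility; each successive application of $u_{2n}=u_n v_n$ picks up a factor of $3$, which is the source of the geometric term in the exponent of the frequency. A parallel analysis for $(v_n)$ is anchored at $v_0=2$ rather than $u_0=0$, which is why $\pm 2$ (respectively $\pm P$ in part (3)) plays the role in the $v$-formulas that $\pm u_{2\cdot 3^{\ldots}}$ plays in the $u$-formulas.

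The main obstacle will be the bookkeeping across the three sub-cases. Case (3), with $\delta\leq k<2\delta-1$, is the regime in which the doubling cascade has not yet had room to propagate past the initial rank-of-apparition level, so only the pair $\pm P$ acquires a high frequency of $3^{k-\delta}$; everything else is forced to lie in the constrained congruence classes modulo $3^\delta$, which accounts for the large set of residues with frequency zero. Case (2), with $k\geq 2\delta-1$, is where the cascade runs to completion, and case (1), with $\delta=1$, is the shifted variant in which the congruence constraint modulo $3^\delta$ is vacuous so the ``otherwise'' frequency becomes $2$ rather than $0$. The precise index bounds $\lfloor(k-1)/2\rfloor$ and $\lfloor(k-2\delta+1)/4\rfloor$ are most cleanly verified by checking that the stated frequencies sum to $h_u(3^k)=4\cdot 3^{k-\delta}$, which serves as the arithmetic sanity check for the entire computation.
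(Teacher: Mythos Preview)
Your outline has the right overall shape, but two basic facts are stated incorrectly and these errors propagate through the whole plan.

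First, the full period is $h_u(3^k)=h_v(3^k)=8\cdot 3^{k-\delta}$, not $4\cdot 3^{k-\delta}$. The rank of apparition is indeed $4\cdot 3^{k-\delta}$, but the Pell relation does \emph{not} force period $=$ rank here; what it gives instead is the half-period anti-symmetry $u_{n+4\cdot 3^{k-\delta}}\equiv -u_n$ and $v_{n+4\cdot 3^{k-\delta}}\equiv -v_n\pmod{3^k}$, which is precisely what pairs the $+$ and $-$ cases in the theorem statement. Your own sanity check catches this: in case~(1) every residue has frequency at least $2$, so the sum of frequencies is at least $2\cdot 3^k=6\cdot 3^{k-1}$, already exceeding the $4\cdot 3^{k-1}$ you wrote down.

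Second, your valuation lemma for $v_n$ is wrong: one has $3\mid v_n\Leftrightarrow 2\mathrel\Vert n$, not $4\mid n$ (check $v_2=P^2+2$ and $v_4=(P^2+2)^2-2$). More precisely $\ord_3 v_n=\ord_3 n+\delta$ exactly when $2\mathrel\Vert n$. This matters because the factorizations $u_s\pm(-1)^{(s-t)/2}u_t=u_{(s\pm t)/2}v_{(s\mp t)/2}$ (note the sign you omitted) always produce one $u$-factor and one $v$-factor, and the whole counting argument hinges on knowing that these two factors pick up $3$-divisibility on \emph{disjoint} residue classes of the index modulo~$4$.

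Finally, the ``baseline $2$'' does not come from the reflection $n\mapsto -n$; for $\delta\ge 2$ that symmetry is still present yet most residues have frequency $0$. In the paper the argument is to split indices by residue modulo~$4$: one shows by an inductive lifting lemma that the indices with $n\not\equiv 2\pmod 4$ hit each residue in $\{0,\pm 1\}\bmod 3^\delta$ exactly twice (and nothing else), while the indices with $n\equiv 2\pmod 4$ account for all the excess frequencies via the factorization identities. Your ``doubling cascade'' picture is too vague to replace this step; the actual mechanism is a direct characterization, for each target residue $u_{2\cdot 3^{l-\delta}}$, of the full set of even $n$ solving $u_n\equiv u_{2\cdot 3^{l-\delta}}\pmod{3^{2l+1}}$ as two arithmetic progressions modulo $8\cdot 3^{l-\delta+1}$.
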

\begin{thm}\label{thm2}
Let $3\mid P$ and $3^{\delta}\mathrel\Vert P$.
\newline
\newline
1) Suppose $k\in\mathbb{N}$, $k\geq 2\delta-1$. Then for every residue $b$ we have
\begin{equation}\label{thm2case1u}
v_{u}(3^k,b)=
\begin{cases}
3^{\lfloor k/2\rfloor}, & \text{if }b\equiv u_{3^{2\lfloor(k-2\delta+1)/4\rfloor+1}}\pmod{3^k}\\
2\cdot 3^l, &\text{if }b\equiv u_{3^{l-\delta}}\pmod{3^{2l+1}}\text{ for some }l\in\{\delta,\ldots,\lfloor(k-1)/2\rfloor\}\\
1, &\text{if }b\equiv 0\pmod{3^\delta}\\
0, &\text{otherwise}
\end{cases}
\end{equation}
and
\begin{equation}\label{thm2case1v}
v_{v}(3^k,b)=
\begin{cases}
3^{\lfloor k/2\rfloor}, & \text{if }b\equiv 2\pmod{3^k}\\
2\cdot 3^l, &\text{if }b\equiv v_{2\cdot 3^{l-\delta}}\pmod{3^{2l+1}}\text{ for some }l\in\{\delta,\ldots,\lfloor(k-1)/2\rfloor\}\\
1, &\text{if }b\equiv 0\pmod{3^{\delta}}\\
0, &\text{otherwise.}
\end{cases}
\end{equation}
2) Suppose $\delta\geq 2$ and $k\in\mathbb{N}$, $\delta\leq k<2\delta-1$. Then for every residue $b$ we have
\begin{equation}\label{thm2case2u}
v_{u}(3^k,b)=
\begin{cases}
3^{k-\delta}, & \text{if }b\equiv 1\pmod{3^k}\\
1, &\text{if }b\equiv 0\pmod{3^{\delta}}\\
0, &\text{otherwise}
\end{cases}
\end{equation}
and
\begin{equation}\label{thm2case2v}
v_{v}(3^k,b)=
\begin{cases}
3^{k-\delta}, & \text{if }b\equiv 2\pmod{3^k}\\
1, &\text{if }b\equiv 0\pmod{3^{\delta}}\\
0, &\text{otherwise.}
\end{cases}
\end{equation}
\end{thm}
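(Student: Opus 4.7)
The plan is to mirror the Bundschuh--Bundschuh approach \cite{bundschuh3} from the Fibonacci setting, specialised to the case $3\mid P$.  Because $3\mid P$, the recurrence degenerates modulo $3$ to $u_{n}\equiv u_{n-2}$, so $u_{2m}\equiv 0$, $u_{2m+1}\equiv 1\pmod{3}$ and symmetrically $v_{2m}\equiv 2$, $v_{2m+1}\equiv 0\pmod{3}$.  This already yields the ``otherwise'' rows (frequency $0$) of the theorem, since any residue not respecting these parity-compatible conditions modulo $3^{\delta}$ cannot appear at all.

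\textbf{Step 1: Valuations, rank of apparition, and Pisano period.}  Using the standard identities $u_{2m}=u_{m}v_{m}$, $v_{2m}=v_{m}^{2}-2(-1)^{m}$, and $u_{m+1}u_{m-1}-u_{m}^{2}=(-1)^{m}$, I would inductively establish the $3$-adic valuations
\begin{equation*}
\ord_{3}(u_{2m})=\delta+\ord_{3}(m),\qquad \ord_{3}(v_{2m+1})=\delta+\ord_{3}(2m+1),
\end{equation*}
together with the fact that $v_{2m}$ is a $3$-adic unit.  These immediately give the rank of apparition $\rho_{u}(3^{k})=2\cdot 3^{k-\delta}$ for $k\geq\delta$, and the determinant relation then determines the Pisano periods $h_{u}(3^{k})$ and $h_{v}(3^{k})$ with the following dichotomy: for $k<2\delta-1$ the Pisano period equals $\rho_{u}$ (because $u_{\rho_{u}-1}\equiv 1\pmod{3^{k}}$ already), while for $k\geq 2\delta-1$ the full layered structure has room to develop.

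\textbf{Step 2: Counting.}  For even indices $n=2m\in[0,h_{u})$, the factorisation $u_{2m}=u_{m}v_{m}$ combined with Step~1 shows that $u_{2m}$ traverses each residue $\equiv 0\pmod{3^{\delta}}$ exactly once, which accounts for the frequency-$1$ row.  For odd indices, applying the identities $u_{M+N}-u_{M-N}=v_{M}u_{N}$ ($N$ even) and $u_{M+N}-u_{M-N}=u_{M}v_{N}$ ($N$ odd) with $M=m+m'+1$, $N=m-m'$ and invoking the valuation formulas yields in both parities
\begin{equation*}
\ord_{3}\bigl(u_{2m+1}-u_{2m'+1}\bigr)=2\delta+\ord_{3}\bigl(m(m+1)-m'(m'+1)\bigr).
\end{equation*}
Consequently, when $k\geq 2\delta$, the relation $u_{2m+1}\equiv u_{2m'+1}\pmod{3^{k}}$ is equivalent to $m(m+1)\equiv m'(m'+1)\pmod{3^{k-2\delta}}$, so the odd-index frequencies reduce to counting the fibres of the map $m\mapsto m(m+1)$ on $\mathbb{Z}/3^{k-2\delta}\mathbb{Z}$.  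The resulting fibre sizes are exactly $2\cdot 3^{l}$ for $l\in\{\delta,\ldots,\lfloor(k-1)/2\rfloor\}$ and $3^{\lfloor k/2\rfloor}$, with canonical representatives $u_{3^{l-\delta}}$ and $u_{3^{2\lfloor(k-2\delta+1)/4\rfloor+1}}$, matching the theorem.

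\textbf{The $v$-sequence and expected obstacle.}  The analysis for $(v_{n})$ runs in parallel using $v_{0}=2$ in place of $u_{1}=1$, together with the identity $v_{m+n}+(-1)^{n}v_{m-n}=v_{m}v_{n}$, so that the even-indexed $v_{2m}$ play the role of the odd-indexed $u_{2m+1}$.  I expect the principal difficulty to lie in the fibre count for $m\mapsto m(m+1)$ modulo $3^{k-2\delta}$: one must show that on passing from modulus $3^{j}$ to $3^{j+1}$ each fibre splits into exactly three equal subclasses with no spurious collisions, and that the listed cluster representatives are pairwise distinct modulo $3^{k}$.  This is exactly where the hypothesis $k\geq 2\delta-1$ is used; Part~2 of the theorem covers the degenerate regime $\delta\leq k<2\delta-1$, in which no layering has yet occurred and all odd-indexed $u_{n}$ collapse to $b\equiv 1\pmod{3^{k}}$, producing total frequency $3^{k-\delta}$ at that single residue.
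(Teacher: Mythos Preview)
Your approach is correct and takes a genuinely different route from the paper. The paper proceeds by explicit characterization lemmas: it shows directly (Lemmas~\ref{lem12} and~\ref{lem13}) that the solution set of $u_n\equiv u_{J(k)}\pmod{3^k}$ is a single arithmetic progression and that the solution set of $u_n\equiv u_{3^{l-\delta}}\pmod{3^{2l+1}}$ is a union of two progressions, then counts each (Propositions~\ref{prop4},~\ref{prop5}) and finishes with a Hensel-type lifting for the even indices (Lemma~\ref{lem14}, Proposition~\ref{prop6}) plus a global cardinality check. Your key step instead collapses the odd-index analysis to the single identity
\[
\ord_3\bigl(u_{2m+1}-u_{2m'+1}\bigr)=2\delta+\ord_3\bigl((m-m')(m+m'+1)\bigr),
\]
which is correct (it follows from \eqref{lem2.1}--\eqref{lem2.2} together with Lemma~\ref{lem10}, splitting on the parity of $m-m'$), and then reduces the frequency count to the fibres of $y\mapsto y^2$ on $\mathbb{Z}/3^{k-2\delta}\mathbb{Z}$ via $4m(m+1)=(2m+1)^2-1$. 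This is more conceptual and handles all the odd-index cases uniformly, whereas the paper's method is more hands-on but makes the cluster representatives $u_{3^{l-\delta}}$ and $u_{J(k)}$ visible from the start. Your ``expected obstacle'' about fibre splitting is in fact routine once you pass to $y^2$: for $\ord_3(y_0)=\alpha$ with $2\alpha<k-2\delta$ one gets exactly two square roots modulo $3^{k-2\delta-\alpha}$, giving frequency $2\cdot 3^{\delta+\alpha}$, and for $2\alpha\ge k-2\delta$ all such $y$ coalesce to frequency $3^{\lfloor k/2\rfloor}$.

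Two small corrections. First, there is no dichotomy in the Pisano period: one has $h_u(3^k)=h_v(3^k)=\rho_u(3^k)=2\cdot 3^{k-\delta}$ for every $k\ge\delta$, not just for $k<2\delta-1$; the distinction between Parts~1 and~2 of the theorem is purely about whether the layering in the odd-index residues has begun, not about the period length. Second, your one-line justification for the frequency-$1$ row (``$u_{2m}$ traverses each residue $\equiv 0\pmod{3^\delta}$ exactly once'') needs an actual argument; the paper supplies this via the incremental lifting of Lemma~\ref{lem14}, and you should do likewise or invoke your valuation formula for $u_{2m}-u_{2m'}$ to the same effect.
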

\section{Classical Lemmas on Lucas Sequences}
We first state some basic properties and classical lemmas on Lucas sequences. Let $D:=P^2+4$, $\alpha:=\frac{P+\sqrt{D}}{2}$ and $\beta:=\frac{P-\sqrt{D}}{2}$. We have the Binet formulas for $\left(u_n\right)_n$ $\left(v_n\right)_n$, which are
\begin{equation*}
u_n=\frac{\alpha^n-\beta^n}{\alpha-\beta}
\end{equation*}
and
\begin{equation*}
v_n=\alpha^n+\beta^n,
\end{equation*}
valid for all $n\in\mathbb{N}$. The results in the following lemmas are easy to prove and found in \cite{ballot} and \cite{lucas}.
\begin{lemma}\label{lem1}
For all $n\in\mathbb{Z}$ we have the following.
\begin{align}
u_{2n}&=u_nv_n\label{lem1.1}\\
v_{2n}&=v_n^2-2(-1)^n\label{lem1.2}\\
v_{2n}&=u_nv_{n+1}+u_{n-1}v_n\label{lem1.3}\\
u_{3n}&=u_n\left(Du_n^2+3(-1)^n\right)\label{lem1.4}.
\end{align}
Also, for all $n,q,r\in\mathbb{Z}$ with $n$ and $q$ not both zero, we have 
\begin{equation}
\gcd\left(u_{qn+r},u_n\right)=\left(u_n,u_r\right).\label{lem1.6}
\end{equation}
\end{lemma}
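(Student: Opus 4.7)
The plan is to derive every identity directly from Binet's formulas, together with the relations $\alpha+\beta=P$, $\alpha\beta=-1$, and $(\alpha-\beta)^2=D$ induced by $\alpha$ and $\beta$ being the roots of $x^2-Px-1$. Identities \eqref{lem1.1} and \eqref{lem1.2} are immediate: $u_nv_n=\frac{(\alpha^n-\beta^n)(\alpha^n+\beta^n)}{\alpha-\beta}=\frac{\alpha^{2n}-\beta^{2n}}{\alpha-\beta}=u_{2n}$, and $v_n^2=\alpha^{2n}+\beta^{2n}+2(\alpha\beta)^n=v_{2n}+2(-1)^n$. For \eqref{lem1.4} I would use the factorization $\alpha^{3n}-\beta^{3n}=(\alpha^n-\beta^n)^3+3(\alpha\beta)^n(\alpha^n-\beta^n)$; dividing through by $\alpha-\beta$ and using $(\alpha-\beta)^2=D$ and $(\alpha\beta)^n=(-1)^n$ gives $u_{3n}=Du_n^3+3(-1)^nu_n=u_n(Du_n^2+3(-1)^n)$.

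For \eqref{lem1.3} the cleanest route is to first establish the general product formula $u_mv_n=u_{m+n}+(-1)^nu_{m-n}$, obtained by expanding $\frac{(\alpha^m-\beta^m)(\alpha^n+\beta^n)}{\alpha-\beta}$ and collecting the cross term $(\alpha\beta)^{\min(m,n)}$. Specializing to $(m,n)=(n,n+1)$ and $(n-1,n)$ and summing cancels the $(-1)^n$ contributions from $u_{-1}$ and $u_1$, leaving $u_nv_{n+1}+u_{n-1}v_n=u_{2n+1}+u_{2n-1}$. A short Binet check, using $\alpha\cdot v_{2n}=\alpha^{2n+1}-\beta^{2n-1}$ and the analogous identity for $\beta$, shows $u_{2n+1}+u_{2n-1}=v_{2n}$, completing the identity.

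For \eqref{lem1.6} I would first record the standard addition formula $u_{m+n}=u_mu_{n+1}+u_{m-1}u_n$ (verified from Binet by splitting $\alpha^{m+n}-\beta^{m+n}$ as $(\alpha^n-\beta^n)\alpha^m+(\alpha^m-\beta^m)\beta^n$ and symmetrizing). A straightforward induction on $q$, applied to this formula with the pair $(qn,n)$, yields $u_n\mid u_{qn}$. Applying the same formula to $u_{qn+r}=u_{qn}u_{r+1}+u_{qn-1}u_r$ and reducing modulo $u_n$ therefore gives $u_{qn+r}\equiv u_{qn-1}u_r\pmod{u_n}$. Finally, the easy induction $\gcd(u_{k+1},u_k)=\gcd(u_k,u_{k-1})=\cdots=\gcd(u_1,u_0)=1$ shows consecutive Lucas terms are coprime, so $\gcd(u_n,u_{qn-1})\mid\gcd(u_{qn},u_{qn-1})=1$; hence $\gcd(u_{qn+r},u_n)=\gcd(u_r,u_n)$.

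No single step is genuinely hard; the main care is bookkeeping the signs in \eqref{lem1.3} (making sure the contributions of $u_{-1}=1$ and $u_1=1$ cancel correctly with alternating signs) and being explicit about the $q=0$ and $n=0$ edge cases in \eqref{lem1.6}, where the divisibility claim $u_n\mid u_{qn}$ and the coprimality of consecutive terms are trivially true.
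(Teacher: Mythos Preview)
Your proposal is correct. Each of the Binet-formula computations checks out: the factorization $a^3-b^3=(a-b)^3+3ab(a-b)$ gives \eqref{lem1.4}, the product rule $u_mv_n=u_{m+n}+(-1)^nu_{m-n}$ specialized as you describe yields \eqref{lem1.3} after the identity $u_{2n+1}+u_{2n-1}=v_{2n}$, and the argument for \eqref{lem1.6} via the addition formula, the divisibility $u_n\mid u_{qn}$, and coprimality of consecutive terms is standard and clean. Your remark on the sign bookkeeping in \eqref{lem1.3} and the degenerate cases $q\le 0$, $n=0$ in \eqref{lem1.6} (handled by $u_{-m}=(-1)^{m+1}u_m$ and $u_0=0$) is well placed.

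By way of comparison, the paper does not actually prove this lemma: it simply asserts that these identities are classical and cites \cite{ballot} and \cite{lucas}. So your write-up is not so much a different approach as a self-contained proof where the paper offers none. The advantage of your version is that a reader does not need to chase references; the cost is a paragraph of routine algebra. If you wanted to shorten it to match the paper's spirit, you could cite the same sources and omit the derivations, but what you have is entirely appropriate for a self-contained exposition.
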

\begin{lemma}\label{lem2}
For all $s,t\in\mathbb{Z}$ of the same parity we have the following.
\begin{align}
u_s+u_t(-1)^{\frac{s-t}{2}}&=u_{\frac{s+t}{2}}v_{\frac{s-t}{2}}\label{lem2.1}\\
u_s-u_t(-1)^{\frac{s-t}{2}}&=u_{\frac{s-t}{2}}v_{\frac{s+t}{2}}\label{lem2.2}\\
v_s+v_t(-1)^{\frac{s-t}{2}}&=v_{\frac{s+t}{2}}v_{\frac{s-t}{2}}\label{lem2.3}\\
v_s-v_t(-1)^{\frac{s-t}{2}}&=Du_{\frac{s+t}{2}}u_{\frac{s-t}{2}}\label{lem2.4}.
\end{align}
\end{lemma}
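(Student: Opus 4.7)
The four identities all have the same shape: the left side is a $\mathbb{Z}$-linear combination of $u_s, u_t$ (or $v_s, v_t$) whose coefficients involve $(-1)^{(s-t)/2}$, and the right side is a product of one $u$ and one $v$ (or two $v$'s, or two $u$'s times $D$) indexed by the half-sum and half-difference. My plan is to prove all four identities uniformly by a direct Binet-formula computation, exploiting the fact that $\alpha\beta=-1$, which is the hypothesis that makes the sign $(-1)^{(s-t)/2}$ the right normalizing factor.

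Concretely, I would set $a:=(s+t)/2$ and $b:=(s-t)/2$, which are integers because $s\equiv t\pmod 2$, and write $s=a+b$, $t=a-b$. Since $\alpha\beta=-1$, we have $(\alpha\beta)^b=(-1)^b=(-1)^{(s-t)/2}$, so
\begin{equation*}
u_t(-1)^{(s-t)/2}=\frac{\alpha^{a-b}-\beta^{a-b}}{\alpha-\beta}\,(\alpha\beta)^b=\frac{\alpha^a\beta^b-\alpha^b\beta^a}{\alpha-\beta},
\end{equation*}
and analogously $v_t(-1)^{(s-t)/2}=\alpha^a\beta^b+\alpha^b\beta^a$. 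For identity \eqref{lem2.1}, adding this to $u_s=(\alpha^{a+b}-\beta^{a+b})/(\alpha-\beta)$ and grouping gives
\begin{equation*}
u_s+u_t(-1)^{(s-t)/2}=\frac{(\alpha^a-\beta^a)(\alpha^b+\beta^b)}{\alpha-\beta}=u_a v_b,
\end{equation*}
which is exactly $u_{(s+t)/2}v_{(s-t)/2}$. Subtracting instead gives \eqref{lem2.2} via the factorization $(\alpha^a+\beta^a)(\alpha^b-\beta^b)/(\alpha-\beta)=v_a u_b$.

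The same bookkeeping handles the $v$-identities. For \eqref{lem2.3}, adding $v_s=\alpha^{a+b}+\beta^{a+b}$ to $\alpha^a\beta^b+\alpha^b\beta^a$ factors as $(\alpha^a+\beta^a)(\alpha^b+\beta^b)=v_a v_b$. For \eqref{lem2.4}, subtracting yields $(\alpha^a-\beta^a)(\alpha^b-\beta^b)$, and since $u_n(\alpha-\beta)=\alpha^n-\beta^n$ and $(\alpha-\beta)^2=D$, this equals $Du_a u_b=Du_{(s+t)/2}u_{(s-t)/2}$.

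There is no real obstacle here — the statement is one of the standard ``sum-to-product'' formulas for Lucas sequences, and the only thing one needs to be careful about is the sign bookkeeping coming from $\alpha\beta=-1$ (which is why our recurrence $u_n=Pu_{n-1}+u_{n-2}$, with $b=+1$, produces $(-1)^{(s-t)/2}$ rather than some other sign). The same argument works for negative $s,t$ once one extends the Binet formula to $\mathbb{Z}$, which is justified because $\alpha,\beta\ne 0$. Hence all four identities follow simultaneously from the single substitution $s=a+b$, $t=a-b$ and the four pairings $u\pm u$, $v\pm v$.
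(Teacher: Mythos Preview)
Your proof is correct and is essentially the approach the paper points to: the paper's proof is a reference to \cite{lucas}, \cite{bundschuh2}, \cite{bundschuh3} together with the remark ``noting that $\alpha\beta=-1$,'' which is precisely the Binet-formula computation you carry out in full. Your substitution $a=(s+t)/2$, $b=(s-t)/2$ and the sign bookkeeping via $(\alpha\beta)^b=(-1)^{(s-t)/2}$ is exactly the intended argument.
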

\begin{proof}
Found in \cite{lucas}. Follow the proofs of Lemma $2.2$ in \cite{bundschuh3} and Lemma $1$ in \cite{bundschuh2}, noting that $\alpha\beta=-1$.
\end{proof}

\section{The Case of $3\nmid P$}

We deal with the case of $3\nmid P$ first.

\begin{note}
For this section let $\delta:=\ord_3\left(P^2+2\right)$.
\end{note}

\subsection{Preliminary Lemmas}

Here we determine several useful preliminary lemmas on relevant divisibility properties of the Lucas sequences, as well as their period lengths modulo $3^k$.

\begin{lemma}\label{lem3}
We have
\begin{align}
4\mid n&\Leftrightarrow 3\mid u_n\label{lem3.1}\\
4\mid n&\Rightarrow\ord_3 u_n=\ord_3 n+\ord_3\delta\label{lem3.2}\\
2\mathrel\Vert n&\Leftrightarrow 3\mid v_n\label{lem3.3}\\
2\mathrel\Vert n&\Leftrightarrow\ord_3v_n=\ord_3n+\ord_3\delta\label{lem3.4}.
\end{align}
\end{lemma}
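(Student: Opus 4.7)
The plan is to exploit the Binet formulas: first over $\mathbb{F}_9$ to establish which terms are divisible by $3$, then $3$-adically via an auxiliary Lucas sequence to pin down the exact valuations. For \eqref{lem3.1} and \eqref{lem3.3}, since $3 \nmid P$ the discriminant $D = P^2 + 4 \equiv 2 \pmod 3$ is a quadratic non-residue, so $\alpha, \beta \in \mathbb{F}_9 \setminus \mathbb{F}_3$ are Frobenius-conjugate, giving $\beta = \alpha^3$. Combined with $\alpha\beta = -1$ this yields $\alpha^4 = -1$, so $\alpha$ has multiplicative order $8$ in $\mathbb{F}_9^\times$ and $\alpha/\beta = -\alpha^2$ has order exactly $4$. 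Binet then gives $u_n \equiv 0 \pmod 3 \Leftrightarrow (\alpha/\beta)^n = 1 \Leftrightarrow 4 \mid n$ and $v_n \equiv 0 \pmod 3 \Leftrightarrow (\alpha/\beta)^n = -1 \Leftrightarrow 2 \mathrel\Vert n$.

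For \eqref{lem3.4}, observe that $w_m := v_{2m} = \alpha^{2m} + \beta^{2m}$ is itself a Lucas-like sequence with roots $\alpha^2, \beta^2$ satisfying $\alpha^2 + \beta^2 = P^2 + 2$ and $\alpha^2 \beta^2 = 1$; hence $w_0 = 2$, $w_1 = P^2 + 2$, and $w_{m+1} = (P^2+2)\,w_m - w_{m-1}$. Writing $P' := P^2 + 2 = 3^\delta P''$ with $3 \nmid P''$, I would induct on $m$ with the recursion reduced modulo $3^{\delta+1}$: since $2\delta \ge \delta + 1$, one has $(P')^2 \equiv 0 \pmod{3^{\delta+1}}$, and the recursion collapses to yield the explicit congruence
\begin{equation*}
w_m \equiv (-1)^{(m-1)/2}\, m\, P' \pmod{3^{\delta+1}} \qquad\text{for all odd } m.
\end{equation*}
For odd $m$ with $3 \nmid m$, this forces $\ord_3 w_m = \delta$, which is the base case $\ord_3 m = 0$ of the claim $\ord_3 w_m = \ord_3 m + \delta$. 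For the inductive step I would use the triplication identity
\begin{equation*}
w_{3k} = w_k^3 - 3 w_k = w_k (w_k^2 - 3),
\end{equation*}
which follows from Binet via $\alpha^2\beta^2 = 1$: when $\ord_3 w_k \ge 1$, we have $\ord_3 w_k^2 \ge 2 > 1 = \ord_3 3$, so $\ord_3(w_k^2 - 3) = 1$ and therefore $\ord_3 w_{3k} = \ord_3 w_k + 1$. Iterating gives $\ord_3 w_m = \ord_3 m + \delta$ for all odd $m$, and since $\ord_3 n = \ord_3 m$ whenever $n = 2m$ with $m$ odd, this is the nontrivial direction of \eqref{lem3.4}. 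The converse is immediate: $\ord_3 n + \delta \ge 1$ forces $3 \mid v_n$, which by \eqref{lem3.3} means $2 \mathrel\Vert n$.

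For \eqref{lem3.2}, given $4 \mid n$ I factor $n = 2^a m$ with $a \ge 2$ and $m$ odd, and iterate \eqref{lem1.1} to obtain $u_n = u_m \prod_{i=0}^{a-1} v_{2^i m}$. By \eqref{lem3.1}, $3 \nmid u_m$; by \eqref{lem3.3}, $3 \nmid v_m$ and $3 \nmid v_{2^i m}$ for $i \ge 2$ (since then $4 \mid 2^i m$); the only factor divisible by $3$ is $v_{2m}$, and \eqref{lem3.4} gives $\ord_3 v_{2m} = \ord_3 m + \delta$. Multiplying yields $\ord_3 u_n = \ord_3 m + \delta = \ord_3 n + \delta$. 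The main obstacle is the explicit congruence $w_m \equiv (-1)^{(m-1)/2} m P' \pmod{3^{\delta+1}}$ in the preceding paragraph; although the induction is mechanical, one must carry both a $P'$-coefficient and a constant term through the recurrence simultaneously and keep the sign pattern straight, and this is the one step that is not purely formal.
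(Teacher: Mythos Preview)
Your proof is correct, but it takes a genuinely different route from the paper. For \eqref{lem3.1} and \eqref{lem3.3} the paper simply tabulates the first few $u_n$ and $v_n$ modulo $3$ and reads off the period, whereas you work in $\mathbb{F}_9$ via the Binet formulas and the Frobenius conjugacy $\beta = \alpha^3$; your argument is more conceptual but requires the reader to be comfortable with finite fields. The more substantial divergence is in the order of dependence between \eqref{lem3.2} and \eqref{lem3.4}: the paper proves \eqref{lem3.2} first---anchoring the base case with $u_4 = P(P^2+2)$ and the gcd relation \eqref{lem1.6}, then stepping up via the triplication identity \eqref{lem1.4} for $u_{3n}$---and only afterwards deduces \eqref{lem3.4} from $v_n = u_{2n}/u_n$. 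You reverse this: you attack \eqref{lem3.4} directly by passing to the auxiliary sequence $w_m = v_{2m}$, establishing the explicit congruence $w_m \equiv (-1)^{(m-1)/2} m P' \pmod{3^{\delta+1}}$ for odd $m$, and lifting via the triplication $w_{3k} = w_k(w_k^2 - 3)$; then you recover \eqref{lem3.2} by iterating the doubling formula \eqref{lem1.1} to isolate $v_{2m}$ as the sole $3$-divisible factor in $u_n$. The paper's approach is shorter and stays entirely within the lemmas already stated (Lemmas~\ref{lem1} and~\ref{lem2}), while yours is more self-contained---your triplication identity for $w$ and the explicit mod-$3^{\delta+1}$ formula are derived from scratch rather than quoted---and the auxiliary-sequence trick generalises cleanly to other parameter choices.
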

\begin{proof}
From $u_0=0$, $u_1=1$, $u_2=P$, $u_3=P^2+1\equiv 2\pmod 3$, $u_4=P^3+2P\equiv 0\pmod 3$, and Lemma \ref{lem1} we have \eqref{lem3.1}. From $v_0=2$, $v_1=P$, $v_2=P^2+2\equiv 0\pmod 3$, $v_3\equiv P\pmod 3$, $v_4\equiv P^2\equiv 1\pmod 3$, $v_5\equiv 2P\pmod 3$, $v_6\equiv 2P^2+1\equiv 0\pmod 3$, $v_7\equiv 2P\pmod 3$, $v_8\equiv 2P^2\equiv 2\pmod 3$, $v_9\equiv 2P+2P\equiv P\pmod 3$ we can see that \eqref{lem3.3} holds.
\newline
\newline
For \eqref{lem3.2} and \eqref{lem3.4} we prove by induction on $\ord_3(n)\in\mathbb{N}_0$. First suppose $4\mid n$, but $\ord_3n=0$. Then $n\equiv 4,8\pmod{12}$. Since $3\mid u_4$, we have by \eqref{lem1.4} that $9\mid u_{12}$. By \eqref{lem1.6} we have $\gcd(u_n,u_{12})=\gcd(u_8,u_{12})=\gcd(u_4,u_{12})$. Noting that $u_4=P^3+2P=P(P^2+2)$, we have $\ord_3\left(P^2+2\right)=\ord_3(u_4)=\ord_3(u_n)$, verifying $\eqref{lem3.2}$ for $\ord_3(n)=0$. Now for all $n\in\mathbb{N}$ such that $3\mid u_n$ we can see by \eqref{lem1.4} that $\ord_3(u_{3n})=\ord_3(u_n)+1$ and \eqref{lem1.4} now follows by induction on $\ord_3(n)$.
\newline
\newline
Now suppose that $2\mathrel\Vert n$. By \eqref{lem1.1} we have $v_n=\frac{u_{2n}}{u_n}$. We have $4\mid 2n$ so by \eqref{lem3.2} we have $\ord_3u_{2n}=\ord_32n+\ord_3\left(P^2+2\right)=\ord_3n+\ord_3\left(P^2+2\right)$ and $3\nmid u_n$. Thus, \eqref{lem3.4} follows.
\end{proof}
\begin{lemma}
For all $k\geq\delta$ we have
\begin{equation*}
h_u\left(3^k\right)=h_v\left(3^k\right)=8\cdot 3^{k-\delta}.
\end{equation*}
\end{lemma}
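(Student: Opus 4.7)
The plan is to first determine the smallest positive index $n$ with $3^k\mid u_n$, use it to pin down $h_u(3^k)$, and then obtain $h_v(3^k)=h_u(3^k)$ via an intertwining argument. Set $N:=4\cdot 3^{k-\delta}$. By \eqref{lem3.1} and \eqref{lem3.2}, $3^k\mid u_n$ iff both $4\mid n$ and $\ord_3(n)\geq k-\delta$, so $N$ is indeed this smallest index. The standard addition formula $u_{m+n}=u_{m+1}u_n+u_mu_{n-1}$ (immediate from Binet via $\alpha\beta=-1$) together with \eqref{lem1.6} then forces $N\mid h_u(3^k)$.

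The key congruence is $v_N\equiv -2\pmod{3^k}$. By \eqref{lem3.4}, $\ord_3 v_{2\cdot 3^j}=j+\delta$ for each $j\geq 0$. Applying \eqref{lem1.2} with $n=2\cdot 3^j$ (even) gives $v_{4\cdot 3^j}=v_{2\cdot 3^j}^2-2\equiv -2\pmod{3^{2(j+\delta)}}$; taking $j=k-\delta$ yields $v_N\equiv -2\pmod{3^{2k}}$, hence also modulo $3^k$. The Binet formula also supplies the identity $2u_{n+1}=v_n+Pu_n$, so at $n=N$ (where $u_N\equiv 0$) we obtain $u_{N+1}\equiv -1\pmod{3^k}$. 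The addition formula then gives $u_{n+N}\equiv -u_n\pmod{3^k}$, so $2N$ is a period of $(u_n)$, while $N$ itself is not (since $u_{N+1}\not\equiv 1$). Combined with $N\mid h_u(3^k)$, this forces $h_u(3^k)=2N=8\cdot 3^{k-\delta}$.

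For the equality $h_v(3^k)=h_u(3^k)$, set
\[
\Phi:=\begin{pmatrix} P & 2 \\ 2 & -P \end{pmatrix},\qquad M:=\begin{pmatrix} P & 1 \\ 1 & 0 \end{pmatrix},
\]
so $M$ is the companion matrix of the recurrence. The formula $M^n=\begin{pmatrix}u_{n+1}&u_n\\u_n&u_{n-1}\end{pmatrix}$ identifies the order of $M$ in $GL_2(\mathbb Z/3^k)$ with $h_u(3^k)$. A direct computation using $v_n=2u_{n+1}-Pu_n$ and its shift $v_{n+1}=Pu_{n+1}+2u_n$ (obtained via the recurrence) gives $\Phi(u_{n+1},u_n)^T=(v_{n+1},v_n)^T$; in particular $\Phi(1,0)^T=(P,2)^T$. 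One verifies $\Phi M=M\Phi$ in one line of matrix multiplication. Since $\det\Phi=-D\equiv -2\pmod 3$ is a unit in $\mathbb Z/3^k$, $\Phi$ is a bijection of $(\mathbb Z/3^k)^2$; it therefore sends $\{M^n(1,0)^T:n\geq 0\}$ bijectively onto $\{M^n(P,2)^T:n\geq 0\}$. These cyclic sets have cardinalities $h_u(3^k)$ and $h_v(3^k)$ respectively, so $h_v(3^k)=h_u(3^k)$.

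The main obstacle is the $3$-adic computation of $v_N\pmod{3^k}$, but it collapses to a single squaring step once \eqref{lem3.4} is invoked. Obtaining $h_u=h_v$ directly would otherwise appear to require ruling out each proper divisor of $8\cdot 3^{k-\delta}$ as a candidate $v$-period; the intertwining matrix $\Phi$ circumvents this entirely.
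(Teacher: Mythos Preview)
Your proof is correct, but the route differs from the paper's in two interesting ways. For $h_u(3^k)$, the paper argues directly via Lemma~\ref{lem2}: it writes $u_{4m+1}-u_1=u_{2m}v_{2m+1}$ and reads off exactly when this vanishes modulo $3^k$ using Lemma~\ref{lem3}. You instead locate the rank of apparition $N=4\cdot 3^{k-\delta}$, compute $v_N\equiv -2$ from \eqref{lem1.2} and \eqref{lem3.4}, deduce $u_{N+1}\equiv -1$, and conclude $h_u=2N$; this is slightly more circuitous but perfectly valid, and in fact your half-period relation $u_{n+N}\equiv -u_n$ is exactly the paper's \eqref{lem5.1}. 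The more substantive divergence is for $h_v(3^k)$: the paper repeats the direct computation (using $v_{4m}-v_0=Du_{2m}^2$ and $v_{4m+1}-v_1=Du_{2m+1}u_{2m}$), whereas your intertwining-matrix argument transfers the result from $u$ to $v$ structurally, without touching the $v$-sequence at all. This is a genuinely different and arguably cleaner idea; it explains \emph{why} $h_u=h_v$ rather than verifying it by parallel calculation, and it would port unchanged to other moduli. The trade-off is that the paper's approach stays entirely within the identities of Lemmas~\ref{lem1}--\ref{lem3} already in hand, while yours imports the companion-matrix formalism and the commutation check $\Phi M=M\Phi$. One cosmetic point: $-D\equiv 1\pmod 3$, not $-2$ (though of course $-2\equiv 1$), and your appeal to the addition formula for ``$N\mid h_u(3^k)$'' is superfluous, since \eqref{lem1.6} alone gives the rank-of-apparition divisibility.
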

\begin{proof}
Let $k\geq\delta$. From \eqref{lem3.1} we have $4\mid h_u\left(3^k\right)$. For all $m\in\mathbb{Z}$ we have $3^k\mid u_{4m}$ if and only if $3^{k-\delta}\mid m$ by \eqref{lem3.2}. Also, for all $m\in\mathbb{Z}$ we have $u_{4m+1}-u_1=u_{2m}v_{2m+1}$ by \eqref{lem2.2}. From Lemma \ref{lem3} we have $3^k\mid u_{2m}v_{2m+1}$ if and only if $3^k\mid u_{2m}$ if and only if $2\cdot 3^{k-\delta}\mid m$. It follows that $h_u\left(3^k\right)=8\cdot 3^{k-\delta}$.
\newline
\newline
From \eqref{lem3.3} we have $4\mid h_v\left(3^k\right)$. For all $m\in\mathbb{Z}$ we have $v_{4m}-v_0=Du_{2m}^2$ by \eqref{lem2.4}. From Lemma \ref{lem3} we have $3^k\mid Du_{2m}^2$ if and only if $3^{\lceil k/2\rceil}\mid u_{2m}$ if and only if $3^{\lceil k/2\rceil-\delta}\mid m$. Also, for all $m\in\mathbb{Z}$ we have $v_{4m+1}-v_1=Du_{2m+1}u_{2m}$ by \eqref{lem2.4}. As before, $3^k\mid u_{2m+1}u_{2m}$ if and only if $2\cdot 3^{k-\delta}\mid m$. It follows that $h_v\left(3^k\right)=8\cdot 3^{k-\delta}$.
\end{proof}
\begin{lemma}
For all $k\geq\delta$ and $n\in\mathbb{Z}$ we have 
\begin{equation}\label{lem5.1}
u_{n+4\cdot 3^{k-\delta}}\equiv-u_n\pmod{3^k}
\end{equation}
and
\begin{equation}\label{lem5.2}
v_{n+4\cdot 3^{k-\delta}}\equiv-v_n\pmod{3^k},
\end{equation}
\end{lemma}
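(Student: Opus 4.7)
The idea is that $4\cdot 3^{k-\delta}$ is exactly half of the common period $h_u(3^k)=h_v(3^k)=8\cdot 3^{k-\delta}$ established in the previous lemma, so a shift by this amount ought to flip the sign of both sequences. To realize this algebraically, I would apply the same-parity addition formulas \eqref{lem2.1} and \eqref{lem2.3} from Lemma \ref{lem2} with the choice $s=n+4\cdot 3^{k-\delta}$ and $t=n$. Here $s$ and $t$ have the same parity, $(s+t)/2=n+2\cdot 3^{k-\delta}$ and $(s-t)/2=2\cdot 3^{k-\delta}$ is even, so $(-1)^{(s-t)/2}=1$ and the two identities collapse to
\begin{align*}
u_{n+4\cdot 3^{k-\delta}}+u_n&=u_{n+2\cdot 3^{k-\delta}}\,v_{2\cdot 3^{k-\delta}},\\
v_{n+4\cdot 3^{k-\delta}}+v_n&=v_{n+2\cdot 3^{k-\delta}}\,v_{2\cdot 3^{k-\delta}}.
\end{align*}

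It therefore suffices to show $3^k\mid v_{2\cdot 3^{k-\delta}}$. This follows immediately from \eqref{lem3.4}: since $2\mathrel\Vert 2\cdot 3^{k-\delta}$, Lemma \ref{lem3} gives $\ord_3 v_{2\cdot 3^{k-\delta}}=\ord_3\!\left(2\cdot 3^{k-\delta}\right)+\delta=(k-\delta)+\delta=k$. Substituting this divisibility into the two displayed equalities yields the desired congruences $u_{n+4\cdot 3^{k-\delta}}\equiv -u_n\pmod{3^k}$ and $v_{n+4\cdot 3^{k-\delta}}\equiv -v_n\pmod{3^k}$.

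There is essentially no obstacle: both ingredients (the addition formulas of Lemma \ref{lem2} and the sharp $3$-adic valuation of $v_m$ for $2\mathrel\Vert m$) are already available, and the only checks to make are that $s$ and $t$ have the same parity and that $(-1)^{(s-t)/2}=+1$, both of which are automatic once $s-t=4\cdot 3^{k-\delta}$. The argument for $(v_n)_n$ is completely parallel to that for $(u_n)_n$, differing only in which identity from Lemma \ref{lem2} is invoked.
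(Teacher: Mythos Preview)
Your proof is correct and follows essentially the same route as the paper: apply \eqref{lem2.1} and \eqref{lem2.3} with $s=n+4\cdot 3^{k-\delta}$, $t=n$, and then invoke \eqref{lem3.4} to get $\ord_3 v_{2\cdot 3^{k-\delta}}=k$. You are slightly more explicit than the paper in checking the parity condition and the sign $(-1)^{(s-t)/2}=1$, but the argument is otherwise identical.
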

\begin{proof}
Let $k\geq\delta$ and $n\in\mathbb{Z}$. By \eqref{lem2.1} we have
\begin{equation*}
u_{n+4\cdot 3^{k-\delta}}+u_n=u_{n+2\cdot 3^{k-\delta}}v_{2\cdot 3^{k-\delta}}.
\end{equation*}
Since $2\mathrel\Vert 2\cdot 3^k$ we have $\ord_3\left(v_{2\cdot 3^{k-\delta}}\right)=k-\delta+\delta=k$ by \eqref{lem3.4}. Thus, \eqref{lem5.1} follows.
\newline
\newline
By \eqref{lem2.3} we have
\begin{equation*}
v_{n+4\cdot 3^{k-\delta}}+v_n=v_{n+2\cdot 3^{k-\delta}}v_{2\cdot 3^{k-\delta}}.
\end{equation*}
Thus, \eqref{lem5.2} follows again by \eqref{lem3.4}.
\end{proof}
\begin{lemma}
For all $k\geq\delta$ and $n\in\mathbb{Z}$ we have
\begin{equation}\label{lem6}
u_{3^{k-\delta}}v_n\equiv v_{3^{k-\delta}}u_{n-2\cdot 3^{k-\delta}}\pmod{3^k}.
\end{equation}
\end{lemma}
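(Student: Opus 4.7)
Set $m := 3^{k-\delta}$. The plan is to reduce the difference $u_m v_n - v_m u_{n-2m}$ to a multiple of $v_{2m}$, and then apply Lemma 3 to conclude $3^k \mid v_{2m}$. The oddness of $m$ (being an odd power of 3) will be used repeatedly to clear signs, and the whole argument is essentially bookkeeping on top of the addition formulas of Lemma 2.

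First I would apply identity \eqref{lem2.2} with $(s,t) = (n+m,\ n-m)$, so that $(s-t)/2 = m$ and $(s+t)/2 = n$; this yields $u_m v_n = u_{n+m} - (-1)^m u_{n-m}$, which collapses to $u_m v_n = u_{n+m} + u_{n-m}$ since $m$ is odd. Next I would apply \eqref{lem2.2} again with $(s,t) = (n-m,\ 3m-n)$, so that $(s-t)/2 = n-2m$ and $(s+t)/2 = m$, giving $u_{n-2m} v_m = u_{n-m} - (-1)^n u_{3m-n}$. Using the index-negation identity $u_{-j} = (-1)^{j+1} u_j$ (immediate from the Binet formula, since $\alpha\beta = -1$) to rewrite $u_{3m-n}$ in terms of $u_{n-3m}$, the $(-1)^n$ factor precisely cancels against the sign produced by negating the index, leaving the parity-free expression $v_m u_{n-2m} = u_{n-m} - u_{n-3m}$.

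Subtracting then gives $u_m v_n - v_m u_{n-2m} = u_{n+m} + u_{n-3m}$, and by \eqref{lem2.1} applied with $(s,t) = (n+m,\ n-3m)$ (so $(s-t)/2 = 2m$ and $(s+t)/2 = n-m$) this equals $u_{n-m} v_{2m}$. Since $2 \mathrel\Vert 2m = 2\cdot 3^{k-\delta}$, Lemma 3, specifically \eqref{lem3.4}, gives $\ord_3(v_{2m}) = (k-\delta) + \delta = k$, so $3^k \mid v_{2m}$ and the congruence follows.

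The only delicate point is the sign bookkeeping in the second step: one must verify that the $(-1)^n$ from \eqref{lem2.2} and the $(-1)^{n-3m+1}$ from the index-negation identity really do cancel, and this cancellation uses crucially that $m$ is odd. Without it, the difference would not collapse to a clean multiple of $v_{2m}$, and the identity would spuriously appear to fail on one parity class of $n$. Everything else is a direct application of the identities already stated in Lemma 2 and Lemma 3.
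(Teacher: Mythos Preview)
Your proof is correct, but it takes a genuinely different route from the paper. The paper verifies the congruence at two consecutive indices, $n = 3^{k-\delta}$ and $n = 3^{k-\delta}+1$ (the first is trivial via $u_{3^{k-\delta}} = u_{-3^{k-\delta}}$, the second uses \eqref{lem1.3} together with $\ord_3 v_{2\cdot 3^{k-\delta}} = k$), and then observes that both sides of \eqref{lem6} satisfy the same second-order recurrence in $n$, so the congruence propagates to all $n$. You instead prove the exact identity $u_m v_n - v_m u_{n-2m} = u_{n-m} v_{2m}$ directly from the addition formulas of Lemma~\ref{lem2}, and only then invoke \eqref{lem3.4}. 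Your approach has the advantage of producing a clean closed-form expression for the difference, valid for all $n$ at once, while the paper's approach is shorter and leans on the linear-recurrence structure to avoid the sign bookkeeping you flagged. One small wording remark: ``odd power of $3$'' is ambiguous (the exponent $k-\delta$ need not be odd); what you actually use, and what is true, is simply that $m = 3^{k-\delta}$ is an odd integer.
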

\begin{proof}
Fix $k\geq\delta$. Since $u_{3^{k-\delta}}=u_{-3^{k-\delta}}$ we have
\begin{equation*}
u_{3^{k-\delta}}v_{3^{k-\delta}}=v_{3^{k-\delta}}u_{-3^{k-\delta}}
\end{equation*}
and \eqref{lem6} holds for $n=3^{k-\delta}$. From \eqref{lem1.3} we have
\begin{equation*}
u_{3^{k-\delta}}v_{3^{k-\delta}+1}+u_{3^{k-\delta}-1}v_{3^{k-\delta}}=v_{2\cdot 3^{k-\delta}}.
\end{equation*}
Since $u_{3^{k-\delta}-1}=-u_{1-3^{k-\delta}}$, it follows that
\begin{equation*}
u_{3^{k-\delta}}v_{3^{k-\delta}+1}-u_{1-3^{k-\delta}}v_{3^{k-\delta}}=v_{2\cdot 3^{k-\delta}}.
\end{equation*}
Also, \eqref{lem3.4} gives $\ord_3v_{2\cdot 3^{k-\delta}}=k$. Thus, we have \eqref{lem6} for $n=3^{k-\delta}+1$. From the recurrence relations we have \eqref{lem6} for all $n\in\mathbb{Z}$.
\end{proof}
\subsection{The Proof}
We now give more significant lemmas and then derive various propositions explicitly comparing the residues of terms of our sequences modulo the relevant powers of $3$, leading to the proof of Theorem \ref{thm1}.
\begin{note}
For ease of notation, we let $x=2\delta-1$ and $J(k):=2\cdot 3^{2\lfloor(k-x)/4\rfloor+1}$.
\end{note}
\begin{lemma}\label{lem7}
Let $k\in\mathbb{N}$, $k\geq 2\delta-1$, and $n$ be even. Then
\begin{equation}\label{lem7.1}
u_n\equiv u_{J(k)}\pmod{3^k}
\end{equation}
if and only if $n=J(k)+8\cdot 3^{\left\lfloor(k-x)/2\right\rfloor}j$ for some $j\in\mathbb{Z}$. Also,
\begin{equation}\label{lem7.2}
v_n\equiv v_0\pmod{3^k}.
\end{equation}
if and only if $n=8\cdot 3^{\left\lfloor(k-x)/2\right\rfloor}j$ for some $j\in\mathbb{Z}$.
\end{lemma}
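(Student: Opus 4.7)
The plan is to convert each congruence into a divisibility statement for a product of Lucas values (using the sum/difference identities of Lemma \ref{lem2}), and then read off the $3$-adic valuation via Lemma \ref{lem3}.

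For the $u$-statement, since $J(k)=2\cdot 3^{L}$ with $L:=2\lfloor(k-x)/4\rfloor+1$, we have $J(k)\equiv 2\pmod 4$, so by \eqref{lem3.1} $u_{J(k)}$ is a unit mod $3$ while $u_n\equiv 0\pmod 3$ whenever $4\mid n$. This forces $n\equiv 2\pmod 4$. Writing $n=2m$ and $M=3^{L}$ (so that $m,M$ are both odd), identity \eqref{lem2.2} with $s=n$, $t=J(k)$ gives
\[u_n-u_{J(k)}=u_{m-M}\,v_{m+M}.\]
Both $m-M$ and $m+M$ are even, and exactly one of them is divisible by $4$. If $4\mid m+M$ (so $2\mathrel\Vert m-M$), then \eqref{lem3.1} and \eqref{lem3.3} make $u_{m-M}v_{m+M}$ a unit mod $3$, contradicting $u_n\equiv u_{J(k)}\pmod 3$; hence $4\mid m-M$, i.e.\ $n\equiv J(k)\pmod 8$.

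Under this restriction, \eqref{lem3.2} and \eqref{lem3.4} give $\ord_3(u_{m-M}v_{m+M})=\alpha+\beta+2\delta$, where $\alpha:=\ord_3(m-M)$ and $\beta:=\ord_3(m+M)$. The identity $(m+M)-(m-M)=2M$ has $3$-adic valuation $L$, so the ultrametric inequality leaves only three possibilities: $\alpha=L<\beta$; $\alpha>L=\beta$; or $\alpha=\beta\le L$. In the first two cases $\alpha\ge L\ge\lfloor(k-x)/2\rfloor$ and $\alpha+\beta\ge 2L+1\ge k-2\delta$, so both the valuation condition and the desired conclusion hold automatically. In the third, $\alpha+\beta=2\alpha\ge k-2\delta$ is equivalent to $\alpha\ge\lceil(k-2\delta)/2\rceil=\lfloor(k-x)/2\rfloor$, as one verifies by examining $k-x\pmod 4$. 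Combining with $4\mid m-M$, the congruence $u_n\equiv u_{J(k)}\pmod{3^k}$ is equivalent to $n\equiv J(k)\pmod{8\cdot 3^{\lfloor(k-x)/2\rfloor}}$.

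For the $v$-statement, apply \eqref{lem2.4} with $s=n$, $t=0$ to obtain $v_n-2(-1)^{n/2}=Du_{n/2}^2$. If $2\mathrel\Vert n$ (so $n/2$ is odd), $u_{n/2}$ is a unit mod $3$ and $D\equiv 2\pmod 3$, so $v_n+2\equiv 2\pmod 3$, forcing $v_n\equiv 0\pmod 3$. If $4\mid n$ but $8\nmid n$, then $n/2$ is even yet not divisible by $4$, $u_{n/2}$ is still a unit mod $3$, and $v_n\equiv 1\pmod 3$. In either case $v_n\not\equiv 2\pmod 3$, so the congruence forces $8\mid n$. Then $3^k\mid Du_{n/2}^2$ reduces to $\ord_3 u_{n/2}\ge\lceil k/2\rceil$; by \eqref{lem3.2}, $\ord_3 u_{n/2}=\ord_3 n+\delta$, so the condition becomes $\ord_3 n\ge\lceil k/2\rceil-\delta=\lfloor(k-x)/2\rfloor$. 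Thus the solutions are exactly $n=8\cdot 3^{\lfloor(k-x)/2\rfloor}j$, $j\in\mathbb{Z}$.

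The main obstacle is the $3$-adic case analysis in the second paragraph: the specific form $L=2\lfloor(k-x)/4\rfloor+1$ is engineered so that $\lfloor(k-x)/2\rfloor\in\{L-1,L\}$, and this tight fit is what makes $\alpha\ge\lfloor(k-x)/2\rfloor$ genuinely equivalent to $\alpha+\beta\ge k-2\delta$ across all four residues of $k-x\pmod 4$; the remaining work is routine bookkeeping.
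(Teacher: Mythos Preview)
Your proof is correct and follows essentially the same approach as the paper: both arguments factor $u_n-u_{J(k)}$ and $v_n-v_0$ via the identities of Lemma~\ref{lem2} and then read off the $3$-adic valuations through Lemma~\ref{lem3}. The only difference is presentational---you handle both directions simultaneously via the ultrametric trichotomy on $\alpha=\ord_3(m-M)$ and $\beta=\ord_3(m+M)$, whereas the paper treats the forward and converse directions separately and, for the converse, invokes the pigeonhole observation that one of $u_{m-M}$, $v_{m+M}$ must be divisible by $3^{\lceil k/2\rceil}$.
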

\begin{proof}
By \eqref{lem2.2} we have
\begin{equation*}
u_{J(k)+8\cdot 3^{\left\lfloor(k-x)/2\right\rfloor}j}-u_{J(k)}=u_{4\cdot 3^{\left\lfloor(k-x)/2\right\rfloor}j}v_{J(k)+4\cdot 3^{\left\lfloor(k-x)/2\right\rfloor}j}.
\end{equation*}
By \eqref{lem3.2} we have $\ord_3\left(u_{4\cdot 3^{\left\lfloor(k-x)/2\right\rfloor}j}\right)=\left\lfloor(k-x)/2\right\rfloor+\delta$. Also, by \eqref{lem3.4} we have
$\ord_3\left(v_{J(k)+4\cdot 3^{\left\lfloor(k-x)/2\right\rfloor}j}\right)\geq\left\lfloor(k-x)/2\right\rfloor+\delta$. Thus, $\ord_3\left(u_{J(k)+8\cdot 3^{\left\lfloor(k-x)/2\right\rfloor}j}-u_{J(k)}\right)=2\left\lfloor(k-x)/2\right\rfloor+2\delta\geq k-x-1+2\delta=k$. Thus, \eqref{lem7.1} follows.
\newline
\newline
For the converse, suppose $\eqref{lem7.1}$ holds for some even $n$. Let $n=2m$. Since $3\nmid u_{J(k)}$, we have $3\nmid u_n$, so that $m$ is odd by \eqref{lem3.1}. By \eqref{lem2.2} we have $u_n-u_{J(k)}=u_{m-3^{2\lfloor(k-x)/4\rfloor+1}}v_{m+3^{2\lfloor(k-x)/4\rfloor+1}}$. Since $3^k\mid u_n-u_{J(k)}$ we have either $3^{\lfloor(k+1)/2\rfloor}\mid u_{m-3^{2\lfloor(k-x)/4\rfloor+1}}$ or $3^{\lfloor(k+1)/2\rfloor}\mid v_{m+3^{2\lfloor(k-x)/4\rfloor+1}}$. Suppose $3^{\lfloor(k+1)/2\rfloor}\mid u_{m-3^{2\lfloor(k-x)/4\rfloor+1}}$. Then we have $m-3^{2\lfloor(k-x)/4\rfloor+1}=4\cdot 3^{\lfloor(k+1)/2\rfloor-\delta}j$ for some $j\in\mathbb{Z}$ by \eqref{lem3.1} and \eqref{lem3.2}. Thus, $n=J(k)+8\cdot 3^{\left\lfloor(k-x)/2\right\rfloor}j$. Suppose $3^{\lfloor(k+1)/2\rfloor}\mid v_{m+3^{2\lfloor(k-x)/4\rfloor+1}}$. Then we have $m+3^{2\lfloor(k-x)/4\rfloor+1}=(4j+2)\cdot 3^{\lfloor(k+1)/2\rfloor-\delta}$ for some $j\in\mathbb{Z}$ by \eqref{lem3.3} and \eqref{lem3.4}. Thus,
\begin{align*}
n&=-J(k)+8\cdot 3^{\left\lfloor(k-x)/2\right\rfloor}j+4\cdot 3^{\left\lfloor(k-x)/2\right\rfloor}\\
&=J(k)+8\cdot 3^{\left\lfloor(k-x)/2\right\rfloor}j+4\cdot 3^{\left\lfloor(k-x)/2\right\rfloor}-4\cdot 3^{2\lfloor(k-x)/4\rfloor+1}.
\end{align*}
Either $2\lfloor(k-x)/4\rfloor+1-\left\lfloor(k-x)/2\right\rfloor=0$ or $1$. If it is $0$, then we have $n=J(k)+8\cdot 3^{\left\lfloor(k-x)/2\right\rfloor}j$. If it is $1$, then we have $n=J(k)+8\cdot 3^{\left\lfloor(k-x)/2\right\rfloor}(j-1)$.
\newline
\newline
By \eqref{lem2.4} we have 
\begin{equation*}
v_{8\cdot 3^{\left\lfloor(k-x)/2\right\rfloor}j}-v_0=Du_{4\cdot 3^{\left\lfloor(k-x)/2\right\rfloor}j}^2.
\end{equation*}
Using $\ord_3\left(u_{4\cdot 3^{\left\lfloor(k-x)/2\right\rfloor}j}\right)\geq\left\lfloor(k-x)/2\right\rfloor+\delta$ once again we obtain \eqref{lem7.2}.
\newline
\newline
For the converse, suppose \eqref{lem7.2} holds for some even $n$. Since $3\nmid v_0$, we have that $3\nmid v_n$. Thus, $n$ is a multiple of $4$ by \eqref{lem3.3}. Let $n=4m$. By \eqref{lem2.4} we have 
\begin{equation*}
v_n-v_0=Du_{2m}^2
\end{equation*}
Thus, $3^{\lfloor(k+1)/2\rfloor}\mid u_{2m}$. It follows that $m=2\cdot 3^{\left\lfloor(k-x)/2\right\rfloor}j$ for some $j\in\mathbb{Z}$ by \eqref{lem3.1}. Thus, $n=8\cdot 3^{\left\lfloor(k-x)/2\right\rfloor}j$.
\end{proof}
\begin{prop}\label{prop1}
Let $k\in\mathbb{N}$, $k\geq 2\delta-1$. Each of the four congruences
\begin{align*}
u_n\equiv&\pm u_{J(k)}\pmod{3^k}\\
v_n\equiv&\pm v_0\pmod{3^k}
\end{align*}
has exactly $3^{\lfloor k/2\rfloor}$ even solutions $n\in\left\{0,\ldots,8\cdot 3^{k-\delta}-1\right\}$. Moreover, for the first two congruences all of these solutions satisfy $n\equiv 2\pmod 4$ and for the second two congruences all of these solutions satisfy $n\equiv 0\pmod 4$.
\end{prop}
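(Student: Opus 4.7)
The plan is to deduce Proposition \ref{prop1} directly from Lemma \ref{lem7} combined with the antipodal symmetries \eqref{lem5.1} and \eqref{lem5.2}.

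First I handle $u_n\equiv u_{J(k)}\pmod{3^k}$. By \eqref{lem7.1}, the even solutions are exactly $n=J(k)+8\cdot 3^{\lfloor(k-x)/2\rfloor}j$ for $j\in\mathbb{Z}$. Intersecting with $\{0,\ldots,8\cdot 3^{k-\delta}-1\}$ yields
\[
\frac{8\cdot 3^{k-\delta}}{8\cdot 3^{\lfloor(k-x)/2\rfloor}}=3^{k-\delta-\lfloor(k-2\delta+1)/2\rfloor}=3^{\lfloor k/2\rfloor}
\]
solutions, where the last equality is a brief case analysis on the parity of $k$. Since $J(k)=2\cdot 3^{2\lfloor(k-x)/4\rfloor+1}\equiv 2\pmod 4$ and the step $8\cdot 3^{\lfloor(k-x)/2\rfloor}$ is divisible by $4$, each such $n$ satisfies $n\equiv 2\pmod 4$.

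Next, for $u_n\equiv -u_{J(k)}\pmod{3^k}$, I invoke \eqref{lem5.1}: $n$ solves this congruence iff $n+4\cdot 3^{k-\delta}$ (reduced mod the period $8\cdot 3^{k-\delta}$) solves $u_n\equiv u_{J(k)}\pmod{3^k}$. The translation by $4\cdot 3^{k-\delta}$ is a bijection of $\{0,\ldots,8\cdot 3^{k-\delta}-1\}$, and since $4\cdot 3^{k-\delta}\equiv 0\pmod 4$ it preserves residues mod $4$. Hence $u_n\equiv -u_{J(k)}\pmod{3^k}$ also admits exactly $3^{\lfloor k/2\rfloor}$ solutions in the range, all satisfying $n\equiv 2\pmod 4$.

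The two $v$-congruences follow from the same two-step argument using \eqref{lem7.2} and \eqref{lem5.2} in place of \eqref{lem7.1} and \eqref{lem5.1}. For $v_n\equiv v_0\pmod{3^k}$ the even solutions are $n=8\cdot 3^{\lfloor(k-x)/2\rfloor}j$, giving $3^{\lfloor k/2\rfloor}$ solutions in the range, each automatically divisible by $8$ and hence $\equiv 0\pmod 4$; the shift by $4\cdot 3^{k-\delta}$ then transfers this count to $v_n\equiv -v_0\pmod{3^k}$ while preserving the mod-$4$ class.

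The main obstacle is the floor-function bookkeeping: one must verify $k-\delta-\lfloor(k-2\delta+1)/2\rfloor=\lfloor k/2\rfloor$ in both parity classes of $k$, and check that $8\cdot 3^{\lfloor(k-x)/2\rfloor}$ divides $8\cdot 3^{k-\delta}$ so that the counting step is valid. Both reductions are immediate from the standing hypothesis $k\geq 2\delta-1\geq\delta$.
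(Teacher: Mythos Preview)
Your proof is correct and follows essentially the same approach as the paper: both use Lemma~\ref{lem7} to parametrize the even solutions of the ``$+$'' congruences as an arithmetic progression with common difference $8\cdot 3^{\lfloor(k-x)/2\rfloor}$, count the terms in $\{0,\ldots,8\cdot 3^{k-\delta}-1\}$, and then use the antipodal relations \eqref{lem5.1}--\eqref{lem5.2} to transfer this count to the ``$-$'' congruences. The only cosmetic difference is that you phrase the count as the quotient (range length)/(step) and the transfer as a bijection, whereas the paper writes out the explicit interval of admissible $j$'s in each case.
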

\begin{proof}
First, by Lemma \ref{lem7}, we have that the set of even solutions to $u_n\equiv u_{J(k)}\pmod{3^k}$ are exactly all integers of the form $J(k)+8\cdot 3^{\left\lfloor(k-x)/2\right\rfloor}j$, $j\in\mathbb{Z}$. Notice that $0\leq J(k)+8\cdot 3^{\left\lfloor(k-x)/2\right\rfloor}j<8\cdot 3^{k-\delta}$ if and only if
\begin{equation*}
-\frac{2\cdot 3^{2\lfloor(k-x)/4\rfloor+1}}{8\cdot 3^{\left\lfloor(k-x)/2\right\rfloor}}\leq j<\frac{8\cdot 3^{k-\delta}-J(k)}{8\cdot 3^{\left\lfloor(k-x)/2\right\rfloor}}=3^{\lfloor k/2\rfloor}-\frac{2\cdot 3^{2\lfloor(k-x)/4\rfloor+1}}{8\cdot 3^{\left\lfloor(k-x)/2\right\rfloor}}
\end{equation*}
where we used 
\begin{equation*}
k-\delta-\left\lfloor(k-x)/2\right\rfloor=k-\left\lfloor(k+1)/2\right\rfloor=\left\lfloor k/2\right\rfloor.
\end{equation*}
Counting all such values of $j$ gives the result for the first congruence.
\newline
\newline
For the second congruence $u_n\equiv -u_{J(k)}\pmod{3^k}$ first note that by \eqref{lem5.1} and \eqref{lem7.1} all integers of the form $J(k)+4\cdot 3^{k-\delta}+8\cdot 3^{\left\lfloor(k-x)/2\right\rfloor}j$ are exactly all of the solutions. Arguing as in the first congruence, the appropriate values of $j$ are those satisfying
\begin{equation*}
-\frac{3^{\lfloor k/2\rfloor}}{2}-\frac{2\cdot 3^{2\lfloor(k-x)/4\rfloor+1}}{8\cdot 3^{\left\lfloor(k-x)/2\right\rfloor}}=\frac{-4\cdot 3^{k-\delta}-J(k)}{8\cdot 3^{\left\lfloor(k-x)/2\right\rfloor}}\leq j<\frac{4\cdot 3^{k-\delta}-J(k)}{8\cdot 3^{\left\lfloor(k-x)/2\right\rfloor}}=\frac{3^{\lfloor k/2\rfloor}}{2}-\frac{2\cdot 3^{2\lfloor(k-x)/4\rfloor+1}}{8\cdot 3^{\left\lfloor(k-x)/2\right\rfloor}}
\end{equation*}
Again, we get exactly $3^{\lfloor k/2\rfloor}$ appropriate values of $j$ and the result follows for the second congruence.
\newline
\newline
The proofs for the third and fourth congruences are very similar, using \eqref{lem5.2} and \eqref{lem7.2}.
\end{proof}
\begin{lemma}\label{lem8}
Let $l\in\mathbb{N}$, $l\geq\delta$, and $n$ be even. Then
\begin{equation}\label{lem8.1}
u_n\equiv u_{2\cdot 3^{l-\delta}}\pmod{3^{2l+1}}.
\end{equation}
if and only if $n\equiv 2\cdot 3^{l-\delta}\pmod{8\cdot 3^{l-\delta+1}}$ or $n\equiv 10\cdot 3^{l-\delta}\pmod{8\cdot 3^{l-\delta+1}}$.
\newline
\newline
Also, 
\begin{equation}\label{lem8.2}
v_n\equiv v_{4\cdot 3^{l-\delta}}\pmod{3^{2l+1}}.
\end{equation}
if and only if $n\equiv 4\cdot 3^{l-\delta}\pmod{8\cdot 3^{l-\delta+1}}$ or $n\equiv -4\cdot 3^{l-\delta}\pmod{8\cdot 3^{l-\delta+1}}$.
\end{lemma}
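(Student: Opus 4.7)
My plan is to mimic the proof of Lemma \ref{lem7}, using the difference identities \eqref{lem2.2} and \eqref{lem2.4} together with the $3$-adic valuation formulas of Lemma \ref{lem3}. For \eqref{lem8.1}, since $2\cdot 3^{l-\delta}\equiv 2\pmod 4$, property \eqref{lem3.1} gives $3\nmid u_{2\cdot 3^{l-\delta}}$; hence any even $n$ satisfying $u_n\equiv u_{2\cdot 3^{l-\delta}}\pmod{3^{2l+1}}$ must satisfy $4\nmid n$, so I write $n=2m$ with $m$ odd. Because $m$ and $3^{l-\delta}$ are both odd, $(n-2\cdot 3^{l-\delta})/2=m-3^{l-\delta}$ is even, and \eqref{lem2.2} yields
\begin{equation*}
u_n-u_{2\cdot 3^{l-\delta}}=u_{m-3^{l-\delta}}\,v_{m+3^{l-\delta}},
\end{equation*}
so the task reduces to characterizing when the right-hand side has $\ord_3\geq 2l+1$.

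Setting $c:=m-3^{l-\delta}$, $d:=m+3^{l-\delta}$, both even with $d-c=2\cdot 3^{l-\delta}\equiv 2\pmod 4$, exactly one of $c,d$ is $\equiv 0\pmod 4$ and the other $\equiv 2\pmod 4$. By \eqref{lem3.1} and \eqref{lem3.3}, only the configuration $c\equiv 0$, $d\equiv 2\pmod 4$ makes the product even divisible by $3$, and this single constraint pins down $m\pmod 4$. Under it, \eqref{lem3.2} and \eqref{lem3.4} give $\ord_3(u_c v_d)=\ord_3(c)+\ord_3(d)+2\delta$, so I need $\ord_3(c)+\ord_3(d)\geq 2(l-\delta)+1$. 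Since $\ord_3(d-c)=l-\delta$, a short $3$-adic case split (both equal $\leq l-\delta$ gives sum $\leq 2(l-\delta)$; one strictly larger forces the smaller to equal $l-\delta$ and the larger to be $\geq l-\delta+1$) shows the inequality is equivalent to exactly one of $c,d$ being divisible by $3^{l-\delta+1}$. This pins $m$ modulo $3^{l-\delta+1}$ to one of the two classes $3^{l-\delta}$ or $2\cdot 3^{l-\delta}$. Combining the mod~$4$ and mod~$3^{l-\delta+1}$ conditions on $m$ via CRT and doubling to $n=2m$ lands precisely on $n\equiv 2\cdot 3^{l-\delta}$ or $10\cdot 3^{l-\delta}\pmod{8\cdot 3^{l-\delta+1}}$. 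The converse is immediate: for $n$ in either stated class, the same factorization and \eqref{lem3.2}, \eqref{lem3.4} directly give $\ord_3(u_n-u_{2\cdot 3^{l-\delta}})\geq 2l+1$.

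For \eqref{lem8.2} I apply \eqref{lem2.4} identically. Since $4\cdot 3^{l-\delta}\equiv 0\pmod 4$, \eqref{lem3.3} gives $3\nmid v_{4\cdot 3^{l-\delta}}$, so the congruence forces $4\mid n$; writing $n=4m$, the identity yields
\begin{equation*}
v_n-v_{4\cdot 3^{l-\delta}}=D\,u_{2(m+3^{l-\delta})}\,u_{2(m-3^{l-\delta})}.
\end{equation*}
One checks $3\nmid D$ (since $D=(P^2+2)+2\equiv 2\pmod 3$), so requiring $\ord_3$ of the product to be $\geq 2l+1$ first forces $m$ odd (so both $u$-factors are divisible by $3$ at all) and then, by the same $3$-adic case split applied to $a=m+3^{l-\delta}$, $b=m-3^{l-\delta}$, forces $m\pmod{3^{l-\delta+1}}$ into $\{3^{l-\delta},\,2\cdot 3^{l-\delta}\}$. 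Here the factor of $4$ in the modulus $8\cdot 3^{l-\delta+1}$ is supplied for free by $n=4m$, so no separate mod~$4$ analysis of $m$ is required, and CRT produces $n\equiv\pm 4\cdot 3^{l-\delta}\pmod{8\cdot 3^{l-\delta+1}}$.

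I expect the main obstacle to be the bookkeeping in the $u$ case that sharpens the natural mod-$4\cdot 3^{l-\delta+1}$ conclusion to the stated mod-$8\cdot 3^{l-\delta+1}$ conclusion. The extra factor of $2$ in the modulus comes from nailing down $m$ modulo $4$ (not just modulo $2$), and propagating this mod-$4$ refinement through CRT produces slightly different intermediate residues depending on the parity of $l-\delta$; this is precisely where a careless argument would mis-identify which two residues among $\{2,10,14,22\}\cdot 3^{l-\delta}$ actually satisfy the congruence.
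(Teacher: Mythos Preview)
Your proposal is correct and follows essentially the same approach as the paper: both use the factorizations \eqref{lem2.2} and \eqref{lem2.4} together with the $3$-adic valuation formulas of Lemma~\ref{lem3} to characterize exactly when the difference has $\ord_3\geq 2l+1$. The only organizational difference is that the paper, in the converse direction, applies pigeonhole (``one factor must have $\ord_3\geq l+1$'') and then reads off the mod-$4$ and $\ord_3$ constraints on that factor simultaneously from Lemma~\ref{lem3}, whereas you first isolate the mod-$4$ configuration (forcing both factors to be divisible by $3$) and then run a separate $3$-adic case split on $\ord_3(c)$ versus $\ord_3(d)$; these are the same argument with the bookkeeping rearranged.
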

\begin{proof}
If $i=1,5$, then we have
\begin{equation*}
\ord_3\left(u_{2\cdot 3^{l-\delta}i+8\cdot 3^{l-\delta+1}j}-u_{2\cdot 3^{l-\delta}}\right)=\ord_3\left(u_{3^{l-\delta}(i-1)+4\cdot 3^{l-\delta+1}j}v_{3^{l-\delta}(i+1)+4\cdot 3^{l-\delta+1}j}\right)\geq 2l+1
\end{equation*}
by \eqref{lem2.2}, \eqref{lem3.2}, and \eqref{lem3.4}. More specifically, if $i=1$, then $\ord_3\left(u_{3^{l-\delta}(i-1)+4\cdot 3^{l-\delta+1}j}\right)\geq l+1$ and 
\newline
$\ord_3\left(v_{3^{l-\delta}(i+1)+4\cdot 3^{l-\delta+1}j}\right)=l$, while if $i=5$, then $\ord_3\left(u_{3^{l-\delta}(i-1)+4\cdot 3^{l-\delta+1}j}\right)=l$ and
\newline
$\ord_3\left(v_{3^{l-\delta}(i+1)+4\cdot 3^{l-\delta+1}j}\right)\geq l+1$. Hence, we have \eqref{lem8.1}.
\newline
\newline
For the converse, suppose $\eqref{lem8.1}$ holds for some even $n$. Let $n=2m$. Since $3\nmid u_{2\cdot 3^{l-\delta}}$, we have $3\nmid u_n$, so that $m$ is odd by \eqref{lem3.1}. By \eqref{lem2.2} we have $u_n-u_{2\cdot 3^{l-\delta}}=u_{m-3^{l-\delta}}v_{m+3^{l-\delta}}$. Since $3^{2l+1}\mid u_n-u_{2\cdot 3^{l-\delta}}$ we have either $3^{l+1}\mid u_{m-3^{l-\delta}}$ or $3^{l+1}\mid v_{m+3^{l-\delta}}$. Suppose $3^{l+1}\mid u_{m-3^{l-\delta}}$. Then we have $m-3^{l-\delta}=4\cdot 3^{l-\delta+1}j$ for some $j\in\mathbb{Z}$ by \eqref{lem3.1} and \eqref{lem3.2}. Thus, $n=2\cdot 3^{l-\delta}+8\cdot 3^{l-\delta+1}j$. Suppose $3^{l+1}\mid v_{m+3^{l-\delta}}$. Then we have $m+3^{l-\delta}=(4j+2)\cdot 3^{l-\delta+1}$ for some $j\in\mathbb{Z}$ by \eqref{lem3.3} and \eqref{lem3.4}. Thus, $n=10\cdot 3^{l-\delta}+8\cdot 3^{l-\delta+1}j$.
\newline
\newline
Similarly, if $i=\pm 1$, we have
\begin{equation*}
\ord_3\left(v_{4\cdot 3^{l-\delta}i+8\cdot 3^{l-\delta+1}j}-v_{4\cdot 3^{l-\delta}}\right)=\ord_3\left(Du_{2\cdot 3^{l-\delta}(i-1)+4\cdot 3^{l-\delta+1}j}u_{2\cdot 3^{l-\delta}(i+1)+4\cdot 3^{l-\delta+1}j}\right)\geq 2l+1
\end{equation*}
by \eqref{lem2.4}, \eqref{lem3.3}, and \eqref{lem3.4} with both cases considered separately as before. Hence, we have \eqref{lem8.2}.
\newline
\newline
For the converse, suppose $\eqref{lem8.2}$ holds for some even $n$. First, since $3\mid v_n-v_{4\cdot 3^{l-\delta}}$, but $3\nmid v_{4\cdot 3^{l-\delta}}$ by \eqref{lem3.3} we have $3\nmid v_n$. It follows that $4\mid n$ by \eqref{lem3.3}. Let $n=4m$. By \eqref{lem2.4} we have $v_n-v_{4\cdot 3^{l-\delta}}=Du_{2m-2\cdot 3^{l-\delta}}u_{2m+2\cdot 3^{l-\delta}}$. Since $3^{2l+1}\mid v_n-v_{4\cdot 3^{l-\delta}}$ we have either $3^{l+1}\mid u_{2m-2\cdot 3^{l-\delta}}$ or $3^{l+1}\mid u_{2m+2\cdot 3^{l-\delta}}$. Suppose $3^{l+1}\mid u_{2m-2\cdot 3^{l-\delta}}$. Then we have $2m-2\cdot 3^{l-\delta}=4\cdot 3^{l-\delta+1}j$ for some $j\in\mathbb{Z}$ by \eqref{lem3.1} and \eqref{lem3.2}. Thus, $n=4\cdot 3^{l-\delta}+8\cdot 3^{l-\delta+1}j$. Suppose $3^{l+1}\mid u_{2m+2\cdot 3^{l-\delta}}$. Then we have $2m+2\cdot 3^{l-\delta}=4\cdot 3^{l-\delta+1}j$ for some $j\in\mathbb{Z}$ by \eqref{lem3.3} and \eqref{lem3.4}. Thus, $n=-4\cdot 3^{l-\delta}+8\cdot 3^{l-\delta+1}j$.
\end{proof}
\begin{prop}\label{prop2}
Let $k\geq 2\delta-1$.
\newline
1) Let $n\in\mathbb{N}$ be even such that $u_n\equiv u_{2\cdot 3^{l-\delta}}\pmod{3^{2l+1}}$, where $\delta\leq l\leq\lfloor\frac{k-1}{2}\rfloor$. Then there exists exactly $2\cdot 3^l$ even $m\in\{0,\ldots,8\cdot 3^{k-\delta}-1\}$ such that $u_n\equiv u_m\pmod{3^k}$. The same assertion holds for the congruence $u_n\equiv -u_{2\cdot 3^{l-\delta}}\pmod{3^{2l+1}}$. Moreover all of these solutions satisfy $n\equiv 2\pmod 4$.
\newline
\newline
2) Let $n\in\mathbb{N}$ be even such that $v_n\equiv v_{4\cdot 3^{l-\delta}}\pmod{3^{2l+1}}$, where $\delta\leq l\leq\lfloor\frac{k-1}{2}\rfloor$. Then there exists exactly $2\cdot 3^l$ even $m\in\{0,\ldots,8\cdot 3^{k-\delta}-1\}$ such that $v_n\equiv v_m\pmod{3^k}$. The same assertion holds for the congruence $v_n\equiv -v_{4\cdot 3^{l-\delta}}\pmod{3^{2l+1}}$. Moreover all of these solutions satisfy $n\equiv 0\pmod 4$.
\end{prop}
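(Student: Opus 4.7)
The plan is to mirror the argument of Proposition \ref{prop1}, substituting Lemma \ref{lem8} for Lemma \ref{lem7} and adding a refinement step that lifts the congruence from modulus $3^{2l+1}$ up to modulus $3^k$. I focus on part 1) with the congruence $u_n\equiv u_{2\cdot 3^{l-\delta}}\pmod{3^{2l+1}}$; the $-u_{2\cdot 3^{l-\delta}}$ case will follow by applying \eqref{lem5.1}, and part 2) by the parallel argument with \eqref{lem8.2}, \eqref{lem2.4}, and \eqref{lem5.2}.

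Since $2l+1\leq k$, any even $m$ with $u_m\equiv u_n\pmod{3^k}$ also satisfies $u_m\equiv u_{2\cdot 3^{l-\delta}}\pmod{3^{2l+1}}$, so by \eqref{lem8.1} it lies in one of the two residue classes $2\cdot 3^{l-\delta}$ or $10\cdot 3^{l-\delta}$ modulo $8\cdot 3^{l-\delta+1}$. As $3^{l-\delta}$ is odd, both of these classes sit inside $\{m\equiv 2\pmod 4\}$, which proves the ``moreover'' claim. Writing $n=2\cdot 3^{l-\delta}+8\cdot 3^{l-\delta+1}j_0$ (without loss of generality $n$ lies in the first class), I parametrize $m=2\cdot 3^{l-\delta}+8\cdot 3^{l-\delta+1}j$ in the first class and $m=10\cdot 3^{l-\delta}+8\cdot 3^{l-\delta+1}j'$ in the second. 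In each case apply \eqref{lem2.2} to express $u_m-u_n$ as a product of one $u$-factor and one $v$-factor evaluated at the half-difference and half-sum of $m$ and $n$, then read off the 3-adic valuations via Lemma \ref{lem3}. The first class will yield $\ord_3(u_m-u_n)=2l+1+\ord_3(j-j_0)$ for $j\neq j_0$, and the second class will produce $\ord_3(u_m-u_n)=2l+1+\ord_3(1+2(j'+j_0))$, so in either class the condition $u_m\equiv u_n\pmod{3^k}$ collapses to a single linear congruence on the parameter modulo $3^{k-2l-1}$.

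The constraint $m\in\{0,\ldots,8\cdot 3^{k-\delta}-1\}$ forces the parameter in each class to range over exactly $3^{k-l-1}$ consecutive integers, so each class contributes $3^{k-l-1}/3^{k-2l-1}=3^l$ solutions, totaling $2\cdot 3^l$. Passing from $u_n\equiv u_{2\cdot 3^{l-\delta}}$ to $u_n\equiv -u_{2\cdot 3^{l-\delta}}$ shifts each solution $m$ by $4\cdot 3^{k-\delta}$ via \eqref{lem5.1}, which is a bijection on the range and preserves both the count and the ``$m\equiv 2\pmod 4$'' property. Part 2) is handled identically after replacing \eqref{lem8.1}, \eqref{lem2.2}, \eqref{lem5.1} with \eqref{lem8.2}, \eqref{lem2.4}, \eqref{lem5.2}, and noting that the residue classes $\pm 4\cdot 3^{l-\delta}\pmod{8\cdot 3^{l-\delta+1}}$ are divisible by $4$. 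The main obstacle is the careful bookkeeping of the parities and 3-adic valuations of the half-sums and half-differences in the cross-class case, since this is where the addition formula combines most delicately with Lemma \ref{lem3}; conceptually no new ingredient beyond Lemma \ref{lem8} is required.
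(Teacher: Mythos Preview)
Your proposal is correct and follows essentially the same route as the paper: restrict $m$ to the two residue classes supplied by Lemma~\ref{lem8}, apply \eqref{lem2.2} to factor $u_m-u_n$, read off the $3$-adic valuation of each factor via Lemma~\ref{lem3}, and split into the same-class and cross-class situations to obtain $3^l$ solutions each. The only cosmetic difference is that you fix $n$ in one class by symmetry while the paper keeps both labels $a,b\in\{2,10\}$; the use of \eqref{lem5.1} for the negative congruence and the parallel argument for part~2) with \eqref{lem2.4} and \eqref{lem8.2} match the paper as well.
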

\begin{proof}
1) Let $n\in\mathbb{N}$ be even such that $u_n\equiv u_{2\cdot 3^{l-\delta}}\pmod{3^{2l+1}}$. By Lemma \ref{lem8} $n$ has the form $a\cdot 3^{l-\delta}+8\cdot 3^{l-\delta+1}j$, where $a=2,10$. If $u_n\equiv u_m\pmod{3^k}$, then $u_n\equiv u_m\pmod{3^{2l+1}}$, so that $m$ must be of the form $b\cdot 3^{l-\delta}+8\cdot 3^{l-\delta+1}r$, where $b=2,10$, again by Lemma \ref{lem8}. For all such values of $m$ we have
\begin{equation*}
u_n-u_m=u_{\frac{(a-b)\cdot 3^{l-\delta}}{2}+4\cdot 3^{l-\delta+1}(j-r)}v_{\frac{(a+b)\cdot 3^{l-\delta}}{2}+4\cdot 3^{l-\delta+1}(j+r)}.
\end{equation*}
Suppose $a\neq b$. Then by \eqref{lem3.2} $3^l\mathrel\Vert u_{\frac{(a-b)\cdot 3^{l-\delta}}{2}+4\cdot 3^{l-\delta+1}(j-r)}$. Thus $3^k\mid u_n-u_m$ if and only if $3^{k-l}\mid v_{\frac{(a+b)\cdot 3^{l-\delta}}{2}+4\cdot 3^{l-\delta+1}(j+r)}=v_{2\cdot 3^{l-\delta+1}(2j+2r+1)}$. By \eqref{lem3.4} we have 
\newline
$\ord_3\left(v_{2\cdot 3^{l-\delta+1}(2j+2r+1)}\right)=\ord_3\left(2\cdot 3^{l-\delta+1}(2j+2r+1)\right)+\delta=l+1+\ord_3(2j+2r+1)$. Thus, $3^k\mid u_n-u_m$ if and only if $3^{k-2l-1}\mid 2j+2r+1$. Since $0\leq r\leq 3^{k-l-1}-1$ there are $\frac{3^{k-l-1}}{3^{k-2l-1}}=3^l$ possible values of $m$.
\newline
\newline
Suppose $a=b$. Then $3^l\mathrel\Vert v_{\frac{(a+b)\cdot 3^{l-\delta}}{2}+4\cdot 3^{l-\delta+1}(j+r)}$ and similarly to the above argument we can derive that $3^k\mid u_n-u_m$ if and only if $3^{k-2l-1}\mid j-r$, again leading to $3^l$ possible values of $m$. Thus, there are $2\cdot 3^l$ possible values of $m$ in total. The assertion for the congruence $u_n\equiv -u_{2\cdot 3^{l-\delta}}\pmod{3^{2l+1}}$ can be argued similarly, using \eqref{lem5.1}.
\newline
\newline
Statement 2) follows similarly, using \eqref{lem2.4}, \eqref{lem3.3}, \eqref{lem3.4}, and Lemma \ref{lem8}.
\end{proof}
\begin{lemma}\label{lem9}
Let $k\in\mathbb{N}$ and $0\leq n\leq 8\cdot 3^{k-\delta}-1$ with $n\not\equiv 2\pmod 4$ ($\not\equiv 0\pmod 4$, respectively) and suppose $u_n\equiv b\pmod{3^k}$ ($v_n\equiv b\pmod{3^k}$, respectively), where $0\leq b\leq 3^k-1$. Then the $u_{n+8\cdot 3^{k-\delta}j}$ ($v_{n+8\cdot 3^{k-\delta}j}$, respectively), $j=0,1,2$ are congruent to $b+3^k\lambda$, $\lambda=0,1,2$, modulo $3^{k+1}$ in some order.
\end{lemma}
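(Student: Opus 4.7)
The plan is to show that the three values $u_{n+8\cdot 3^{k-\delta}j}$, $j=0,1,2$, form an arithmetic progression modulo $3^{k+1}$ whose common difference is a nonzero multiple of $3^k$. Once this is established, writing the common difference as $3^kc$ with $c\not\equiv 0\pmod 3$, the three values reduce to $u_n,\,u_n+3^kc,\,u_n+2\cdot 3^kc$ modulo $3^{k+1}$; since $\{0,c,2c\}=\{0,1,2\}\pmod 3$, they exhaust the three lifts $b+3^k\lambda$, $\lambda=0,1,2$, of $b$ in some order.

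To execute this for $u$, apply \eqref{lem2.2} with $s=n+8\cdot 3^{k-\delta}j$ and $t=n+8\cdot 3^{k-\delta}(j-1)$: since the half-difference $4\cdot 3^{k-\delta}$ is even, we obtain
\[
u_{n+8\cdot 3^{k-\delta}j}-u_{n+8\cdot 3^{k-\delta}(j-1)}=u_{4\cdot 3^{k-\delta}}\,v_{n+4\cdot 3^{k-\delta}(2j-1)},\qquad j=1,2.
\]
By \eqref{lem3.2}, $\ord_3\bigl(u_{4\cdot 3^{k-\delta}}\bigr)=k$; write $u_{4\cdot 3^{k-\delta}}=3^k\mu$ with $3\nmid\mu$. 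Iterated applications of \eqref{lem5.2} yield $v_{n+4\cdot 3^{k-\delta}}\equiv v_{n+12\cdot 3^{k-\delta}}\equiv -v_n\pmod{3^k}$, and the hypothesis $n\not\equiv 2\pmod 4$ combined with \eqref{lem3.3} gives $3\nmid v_n$. Hence both consecutive differences reduce to the same value $-3^k\mu v_n\pmod{3^{k+1}}$, a common difference of the form $3^kc$ with $c\not\equiv 0\pmod 3$, as required.

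The $v$ case is entirely analogous via \eqref{lem2.4}: the same index identities produce
\[
v_{n+8\cdot 3^{k-\delta}j}-v_{n+8\cdot 3^{k-\delta}(j-1)}=D\,u_{4\cdot 3^{k-\delta}}\,u_{n+4\cdot 3^{k-\delta}(2j-1)},
\]
where $D=P^2+4\equiv 2\pmod 3$ because $3\nmid P$, and the hypothesis $n\not\equiv 0\pmod 4$ together with \eqref{lem3.1} and \eqref{lem5.1} gives $u_{n+4\cdot 3^{k-\delta}(2j-1)}\equiv -u_n\not\equiv 0\pmod 3$. The main delicate point, common to both cases, is verifying that the two consecutive differences agree modulo $3^{k+1}$, not merely that each is a nonzero multiple of $3^k$ (otherwise one could end up with only two distinct lifts, e.g. if the two differences were $3^k$ and $2\cdot 3^k$ modulo $3^{k+1}$). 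This agreement follows because $v_{n+4\cdot 3^{k-\delta}}\equiv v_{n+12\cdot 3^{k-\delta}}\pmod{3^k}$ (respectively $u_{n+4\cdot 3^{k-\delta}}\equiv u_{n+12\cdot 3^{k-\delta}}\pmod{3^k}$), so multiplication by $3^k\mu$ gives the same residue modulo $3^{k+1}$.
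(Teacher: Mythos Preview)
Your argument is correct and rests on the same factorization identities \eqref{lem2.2}/\eqref{lem2.4} and the same valuation facts \eqref{lem3.1}--\eqref{lem3.4} as the paper, but the logical packaging differs slightly. The paper treats all three pairwise differences at once: for any $0\le i<j\le 2$ one has
\[
u_{n+8\cdot 3^{k-\delta}j}-u_{n+8\cdot 3^{k-\delta}i}=u_{4\cdot 3^{k-\delta}(j-i)}\,v_{n+4\cdot 3^{k-\delta}(j+i)},
\]
and since $j-i\in\{1,2\}$ is coprime to $3$, \eqref{lem3.2} gives $\ord_3\bigl(u_{4\cdot 3^{k-\delta}(j-i)}\bigr)=k$ in every case, while $n\not\equiv 2\pmod 4$ forces the $v$-factor to be a $3$-unit. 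Thus all three pairwise differences are exactly divisible by $3^k$, so the three values are automatically pairwise distinct modulo $3^{k+1}$. This sidesteps the ``delicate point'' you identified: there is no need to check that the two consecutive differences actually coincide modulo $3^{k+1}$, and hence no appeal to \eqref{lem5.1}/\eqref{lem5.2} is required. Your arithmetic-progression framing is a perfectly good alternative; the paper's version is just a touch shorter.
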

\begin{proof}
By \eqref{lem2.2} and \eqref{lem2.4} we have
\begin{equation*}
u_{n+8\cdot 3^{k-\delta}j}-u_{n+8\cdot 3^{k-\delta}i}=u_{4\cdot 3^{k-\delta}(j-i)}v_{n+4\cdot 3^{k-\delta}(j+i)}
\end{equation*}
and
\begin{equation*}
v_{n+8\cdot 3^{k-\delta}j}-v_{n+8\cdot 3^{k-\delta}i}=Du_{4\cdot 3^{k-\delta}(j-i)}u_{n+4\cdot 3^{k-\delta}(j+i)}
\end{equation*}
for all pairs of integers $0\leq i<j\leq 2$. First suppose that $n\not\equiv 2\pmod 4$. In the first equation we have $3\nmid v_{n+4\cdot 3^{k-\delta}(j+i)}$ by \eqref{lem3.3}. Also, $3^{k-\delta}\mathrel\Vert 4\cdot 3^{k-\delta}(j+i)$, so that $3^{k}\mathrel\Vert u_{4\cdot 3^{k-\delta}(j-i)}$. Therefore, $3^k\mathrel\Vert u_{n+8\cdot 3^{k-\delta}j}-u_{n+8\cdot 3^{k-\delta}i}$, so the result on $n\not\equiv 2\pmod 4$ follows. The case of $n\not\equiv 0\pmod 4$ is the same.
\end{proof}

For the next proposition, we will need the following notation.

\begin{notation}
Let $v_u^{\not\equiv 2}\left(3^k,b\right)$ and $v_u^{\equiv 2}\left(3^k,b\right)$ denote the number of indices counted in $v_u\left(3^k,b\right)$ that are $\not\equiv 2\pmod 4$ and $\equiv 2\pmod 4$, respectively. Also, let $v_v^{\not\equiv 0}\left(3^k,b\right)$ and $v_v^{\equiv 0}\left(3^k,b\right)$ denote the number of indices counted in $v_v\left(3^k,b\right)$ that are $\not\equiv 0\pmod 4$ and $\equiv 0\pmod 4$, respectively. 
\end{notation}

\begin{prop}\label{prop3}
For all $k\geq\delta$, if $b\equiv 0,\pm 1\pmod{3^{\delta}}$, then $v_u^{\not\equiv 2}\left(3^k,b\right)=2$, and if $b\equiv 0,\pm P\pmod{3^{\delta}}$, then $v_v^{\not\equiv 0}\left(3^k,b\right)=2$. Also, if $b\not\equiv 0,\pm 1\pmod{3^{\delta}}$, then $v_u^{\not\equiv 2}\left(3^k,b\right)=0$, and if $b\not\equiv 0,\pm P\pmod{3^{\delta}}$, then $v_v^{\not\equiv 0}\left(3^k,b\right)=0$. As well, if $b\not\equiv 0,\pm 1,\pm P\pmod{3^{\delta}}$, then $v_u\left(3^k,b\right)=0$, and if $b\not\equiv 0,\pm 2,\pm P\pmod{3^{\delta}}$, then $v_v\left(3^k,b\right)=0$.
\newline
\newline
Suppose $\delta\geq 2$. For every residue $b$ we either have $v_u\left(3^k,b\right)=v_u^{\not\equiv 2}\left(3^k,b\right)$ or $v_u\left(3^k,b\right)=v_u^{\equiv 2}\left(3^k,b\right)$. Also, for every residue $b$ we either have $v_v\left(3^k,b\right)=v_v^{\not\equiv 0}\left(3^k,b\right)$ or $v_v\left(3^k,b\right)=v_v^{\equiv 0}\left(3^k,b\right)$.
\end{prop}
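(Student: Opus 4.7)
My approach is to directly compute the residues $u_n$ and $v_n \pmod{3^\delta}$ for $n$ in a single period of length $h_u(3^\delta)=h_v(3^\delta)=8$, stratified by $n\pmod 4$, and then lift to higher powers of $3$ using Lemma \ref{lem9}. Since $3^{\delta}\mathrel\Vert P^2+2$ we have $P^2\equiv -2\pmod{3^\delta}$, and a routine use of the recurrence yields $u_n\equiv 0,1,-1,0,-1,1\pmod{3^\delta}$ for $n=0,1,3,4,5,7$ respectively, together with $u_n\equiv P,-P\pmod{3^\delta}$ for $n=2,6$; similarly $v_n\equiv P,0,P,-P,0,-P\pmod{3^\delta}$ for $n=1,2,3,5,6,7$ and $v_n\equiv 2,-2\pmod{3^\delta}$ for $n=0,4$. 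These six-element tables immediately establish the zero-count statements: the full image of $u_n$ modulo $3^\delta$ is $\{0,\pm 1,\pm P\}$ and that of $v_n$ is $\{0,\pm 2,\pm P\}$, while the sub-image coming from $n\not\equiv 2\pmod 4$ is $\{0,\pm 1\}$ for $u$ and the one coming from $n\not\equiv 0\pmod 4$ is $\{0,\pm P\}$ for $v$.

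Next I would induct on $k\geq\delta$ to prove $v_u^{\not\equiv 2}(3^k,b)=2$ whenever $b\equiv 0,\pm 1\pmod{3^\delta}$. The base case $k=\delta$ is precisely the table above, where each of $0,1,-1$ is attained exactly twice among the indices $\{0,1,3,4,5,7\}$. For the inductive step, let $n_1,n_2\in\{0,\ldots,8\cdot 3^{k-\delta}-1\}$ be the two solutions at level $k$. At level $k+1$ each $n_j$ spawns three candidates $n_j+8\cdot 3^{k-\delta}i$, $i=0,1,2$, and by Lemma \ref{lem9} the three residues $u_{n_j+8\cdot 3^{k-\delta}i}\pmod{3^{k+1}}$ are a permutation of $\{b,b+3^k,b+2\cdot 3^k\}$. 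Hence each residue $b+\lambda\cdot 3^k\pmod{3^{k+1}}$ receives exactly two indices, one from each orbit, and since every residue modulo $3^{k+1}$ that is congruent to $0,\pm 1\pmod{3^\delta}$ has this form, the induction closes. The claim for $v_v^{\not\equiv 0}$ is identical, using the $v$-half of Lemma \ref{lem9}.

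For the $\delta\geq 2$ dichotomy, I would note that $P^2\equiv -2\equiv 7\pmod 9$ forces $P\equiv\pm 4\pmod 9$, so $P\not\equiv 0,\pm 1,\pm 2\pmod 9$. Consequently, modulo $3^\delta$ with $\delta\geq 2$, the sets $\{0,\pm 1\}$ and $\{\pm P\}$ are disjoint, and likewise $\{0,\pm P\}$ and $\{\pm 2\}$. Any fixed $b$ therefore lies in at most one of these two sets, so for $v_u$ only one of $v_u^{\not\equiv 2}(3^k,b)$ or $v_u^{\equiv 2}(3^k,b)$ can be nonzero, forcing equality $v_u(3^k,b)=v_u^{\not\equiv 2}(3^k,b)$ or $v_u(3^k,b)=v_u^{\equiv 2}(3^k,b)$; the $v_v$ dichotomy is symmetric.

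The main obstacle is executing the base case cleanly: the six residues of $u_n$ and the six of $v_n$ modulo $3^\delta$ must be read off purely from $P^2\equiv -2\pmod{3^\delta}$, and not from the weaker $P^2\equiv 1\pmod 3$, for otherwise the disjointness argument collapses in the $\delta=1$ case (where $\pm P\equiv\pm 1\pmod 3$, explaining the need for the $\delta\geq 2$ hypothesis in the last part). Once those tables are fixed and combined with Lemma \ref{lem9}, the induction and the disjointness argument are short and mechanical.
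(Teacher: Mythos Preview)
Your proposal is correct and follows essentially the same approach as the paper: compute the period of length $8$ modulo $3^{\delta}$ from $P^2\equiv -2\pmod{3^{\delta}}$, read off the base case, induct via Lemma~\ref{lem9}, and for $\delta\geq 2$ use that $\{0,\pm 1\}$ and $\{\pm P\}$ (respectively $\{0,\pm P\}$ and $\{\pm 2\}$) are disjoint modulo $3^{\delta}$. The only cosmetic difference is in the disjointness step: the paper observes $3^{\delta}\nmid P^2-1=(P-1)(P+1)$ to conclude $0,\pm 1,\pm P$ are all distinct modulo $3^{\delta}$, whereas you compute $P\equiv\pm 4\pmod 9$ explicitly and lift; both yield the same conclusion.
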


\begin{proof}
First, using the fact that $3^\delta\mid P^2+2$ we obtain the following periods of $(u_n)_n$ and $(v_n)_n$ modulo $3^k$:
\begin{equation*}
u_0\equiv 0, u_1\equiv 1, u_2\equiv P, u_3\equiv -1, u_4\equiv 0, u_5\equiv -1, u_6\equiv -P, u_7\equiv 1, u_8\equiv 0, u_9\equiv 1.
\end{equation*}
\begin{equation*}
v_0\equiv 2, v_1\equiv P, v_2\equiv 0, v_3\equiv P, v_4\equiv -2, v_5\equiv -P, v_6\equiv 0, v_7\equiv -P, v_8\equiv 2, v_9\equiv P.
\end{equation*}
We can therefore see that $v_u^{\not\equiv 2}\left(3^{\delta},1\right)=v_u^{\not\equiv 2}\left(3^{\delta},-1\right)=v_u^{\not\equiv 2}\left(3^{\delta},0\right)=2$ and $v_v^{\neq}\left(3^{\delta},P\right)=v_v^{\neq}\left(3^{\delta},-P\right)=v_v^{\neq}\left(3^{\delta},0\right)=2$. Also, if $b\not\equiv 0,\pm 1\pmod{3^{\delta}}$, then $v_u^{\not\equiv 2}\left(3^{\delta},b\right)=0$, and if $b\not\equiv 0,\pm P\pmod{3^{\delta}}$, then $v_v^{\not\equiv 0}\left(3^{\delta},b\right)=0$. The first part now follows through an induction argument, using Lemma \ref{lem9}.
\newline
\newline
Suppose $\delta\geq 2$. Then since $3^{\delta}\mid P^2+2$ we have $3^\delta\nmid P^2-1=(P-1)(P+1)$. Thus, $0,\pm 1,\pm P$ are distinct residues mod $3^{\delta}$ and the statement for the sequence $u$ follows for $k=\delta$. So $n\equiv 2\pmod 4$ and $m\not\equiv 2\pmod 4$ implies that $u_m\not\equiv u_n\pmod{3^{\delta}}$ and so $u_m\not\equiv u_n\pmod{3^k}$ for all $k\geq\delta$. The statement for the sequence $u$ now follows for every $k\geq\delta$. The statement for the sequence $v$ follows similarly.
\end{proof}

We now prove Theorem \ref{thm1}.

\begin{proof}
We divide into each case of Theorem \ref{thm1}.
\newline
\newline
1) Let $k\in\mathbb{N}$. Since $\delta=1$, we have $v_u^{\not\equiv 2}\left(3^k,b\right)=2$ for every $0\leq b\leq 3^k-1$ by Proposition \ref{prop3}. Combining this observation with Propositions \ref{prop1} and \ref{prop2} gives us the first two lines of \eqref{thm1case1u}. It remains to show the third line. Let $S$ be the set of residues that are accounted for by the first two lines of \eqref{thm1case1u}. By Propositions \ref{prop1} and \ref{prop2} we have
\begin{align*}
&\quad\sum_{b\in S}v_u^{\equiv 2}\left(3^k,b\right)\\
&=2\cdot 3^{\left\lfloor k/2\right\rfloor}+\sum_{l=1}^{\left\lfloor\frac{k-1}{2}\right\rfloor}4\cdot 3^l\cdot 3^{k-2l-1}\\
&=2\cdot 3^{\left\lfloor k/2\right\rfloor}+4\cdot 3^{k-1}\sum_{l=1}^{\left\lfloor\frac{k-1}{2}\right\rfloor}3^{-l}\\
&=2\cdot 3^{\left\lfloor k/2\right\rfloor}+2\cdot 3^{k-1}-2\cdot 3^{k-\lfloor(k+1)/2\rfloor}\\
&=2\cdot 3^{k-1}.
\end{align*}
where we used the fact that $k=\lfloor(k+1)/2\rfloor+\left\lfloor k/2\right\rfloor$. Since there are exactly $2\cdot 3^{k-1}$ positive integers $n$ such that $0\leq n\leq 8\cdot 3^{k-1}-1$ with $n\equiv 2\pmod 4$ it follows that for any residue $0\leq b\leq 3^k-1$ not covered in the first two lines of \eqref{thm1case1u} we have $v_u\left(3^k,b\right)=v_u^{\not\equiv 2}\left(3^k,b\right)=2$. \eqref{thm1case1v} can be argued similarly.
\newline
\newline
2) The first two lines of \eqref{thm1case2u} follow from Propositions \ref{prop1}, \ref{prop2}, and \ref{prop3}. The third line follows from Proposition \ref{prop3}. It remains to show the fourth line. As in the proof of 1), we can argue that any residue $b$ that is not accounted for the first three lines we have $v_u\left(3^k,b\right)=v_u^{\not\equiv 2}\left(3^k,b\right)=0$. \eqref{thm1case2v} follows similarly.
\newline
\newline
3) Notice that for all $n\in\mathbb{N}$ we have $u_{8n+2}-u_2=Du_{4n}v_{4n+2}$ by \eqref{lem2.2}. By \eqref{lem3.2} and \eqref{lem3.4} we have $3^{\delta}\mid u_{4n}$ and $3^{\delta}\mid u_{4n+2}$. Thus $3^{2\delta}\mid u_{8n+2}-u_2$, so that $u_{8n+2}\equiv P\pmod{3^k}$. We can similarly derive that $u_{8n+6}\equiv -P\pmod{3^k}$. By these observations and Proposition \ref{prop3} we can see that $u_n\equiv P\pmod{3^k}$ if and only if $n\equiv 2\pmod 8$ and $u_n\equiv -P\pmod{3^k}$ if and only if $n\equiv 6\pmod 8$. The first line of \eqref{thm1case3u} follows. Moreover, the first line exactly accounts for all of the $u_n$ terms with $n\equiv 2\pmod 4$. Thus, the second and third lines also follow from Proposition \ref{prop3}. \eqref{thm1case3v} follows similarly.
\end{proof}

\section{The Case of $3\mid P$}

We now deal with the case $3\mid P$.

\begin{note}
For this section let $\delta:=\ord_3P$.
\end{note}

\begin{lemma}\label{lem10}
We have
\begin{align}
2\mid n&\Leftrightarrow 3\mid u_n\label{lem10.1}\\
2\mid n&\Rightarrow\ord_3 u_n=\ord_3 n+\ord_3P\label{lem10.2}\\
2\nmid n&\Leftrightarrow 3\mid v_n\label{lem10.3}\\
2\nmid n&\Leftrightarrow \ord_3v_n=\ord_3n+\ord_3P\label{lem10.4}.
\end{align}
\end{lemma}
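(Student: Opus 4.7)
The approach mirrors Lemma \ref{lem3} from the $3\nmid P$ case. The plan is to verify \eqref{lem10.1} and \eqref{lem10.3} by direct computation modulo $3$, prove \eqref{lem10.2} by induction on $\ord_3 n$ using the tripling identity \eqref{lem1.4}, and finally deduce \eqref{lem10.4} from the product formula \eqref{lem1.1}.

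For the two mod-$3$ statements, since $3 \mid P$ the recurrence collapses to $u_{n+2} \equiv u_n \pmod 3$ and $v_{n+2} \equiv v_n \pmod 3$. Together with $u_0 = 0$, $u_1 = 1$, $v_0 = 2$, $v_1 = P \equiv 0 \pmod 3$, this immediately gives both \eqref{lem10.1} and \eqref{lem10.3}.

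For \eqref{lem10.2}, the base case $\ord_3 n = 0$ (with $n$ even) is the crux. I would prove by a short induction on $m$ that
\begin{equation*}
u_{2m+1} \equiv 1 \pmod{3^{\delta+1}} \qquad \text{and} \qquad u_{2m} \equiv mP \pmod{3^{\delta+1}},
\end{equation*}
which works because the cross-term $P^2$ arising in the step has $\ord_3 \geq 2\delta \geq \delta+1$ and therefore vanishes modulo $3^{\delta+1}$. Specialising to $n=2m$ with $3\nmid m$ then yields $\ord_3 u_n = \delta$ exactly. For the inductive step on $\ord_3 n$ I apply \eqref{lem1.4}: since $n$ is even, \eqref{lem10.1} already forces $3 \mid u_n$, so $\ord_3(Du_n^2) = 2\ord_3 u_n \geq 2$ (noting $D = P^2+4 \equiv 1 \pmod 3$), while $\ord_3(3(-1)^n) = 1$. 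Hence $\ord_3(Du_n^2 + 3(-1)^n) = 1$ and $\ord_3 u_{3n} = \ord_3 u_n + 1$, which sweeps out all even $n$ (every even $n$ factors as $3^k n_0$ with $n_0$ even and $3\nmid n_0$).

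For \eqref{lem10.4}, I rewrite \eqref{lem1.1} as $v_n = u_{2n}/u_n$ whenever $u_n \neq 0$. When $n$ is odd, \eqref{lem10.1} gives $3 \nmid u_n$, while $2n$ is even with $\ord_3(2n) = \ord_3 n$, so \eqref{lem10.2} yields $\ord_3 v_n = \ord_3 u_{2n} = \ord_3 n + \delta$. When $n$ is even, the same computation gives $\ord_3 v_n = 0$, which is strictly less than $\ord_3 n + \delta \geq 1$, establishing the $\Leftrightarrow$. The main obstacle is the base case of \eqref{lem10.2}: the explicit congruence $u_{2m} \equiv mP \pmod{3^{\delta+1}}$ is the cleanest route, since trying to bootstrap via $u_n = u_{n/2}v_{n/2}$ would introduce a circular dependence on \eqref{lem10.4}.
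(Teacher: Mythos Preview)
Your proposal is correct and follows essentially the same architecture as the paper: verify \eqref{lem10.1} and \eqref{lem10.3} by reducing the recurrence modulo $3$, prove \eqref{lem10.2} by induction on $\ord_3 n$ with the tripling identity \eqref{lem1.4} for the step, and deduce \eqref{lem10.4} from $v_n=u_{2n}/u_n$.

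The one genuine difference is in the base case of \eqref{lem10.2}. The paper does not compute $u_{2m}\pmod{3^{\delta+1}}$ directly; instead it first uses \eqref{lem1.4} to show $\ord_3 u_6=\delta+1$ and then invokes the gcd identity \eqref{lem1.6} to get $\gcd(u_n,u_6)=\gcd(u_2,u_6)$ for $n\equiv 2,4\pmod 6$, from which $3^{\delta}\mathrel\Vert u_n$ follows. Your route, proving $u_{2m}\equiv mP\pmod{3^{\delta+1}}$ by a two-line induction, is more elementary and self-contained (it avoids \eqref{lem1.6} entirely) and arguably gives more information, while the paper's route is more uniform with its treatment of Lemma~\ref{lem3} in the $3\nmid P$ case. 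Both are clean; yours is slightly more transparent here because $P\equiv 0\pmod 3$ kills the cross-terms so cleanly.
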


\begin{proof}
From $u_0=0$, $u_1=1$, $u_2=P\equiv 0\pmod 3$, and $u_3\equiv 0^2+1\equiv 1\pmod 3$ we have \eqref{lem10.1}. From $v_0=2$, $v_1=P\equiv 0\pmod 3$, $v_2\equiv 0^2+2\equiv 2\pmod 3$, $v_3\equiv 2P\equiv 0\pmod 3$ we can see that \eqref{lem10.3} holds.
\newline
\newline
For \eqref{lem10.2} and \eqref{lem10.4} we prove by induction on $\ord_3(n)\in\mathbb{N}_0$. First suppose $2\mid n$, but $\ord_3n=0$. Then $n\equiv 2,4\pmod{6}$. Since $3\mid u_2$, we have by \eqref{lem1.4} that $9\mid u_{6}$. By \eqref{lem1.6} we have $\gcd(u_n,u_{6})=\gcd(u_4,u_{6})=\gcd(u_2,u_{6})$. Noting that $u_2=P$, we have $\ord_3P=\ord_3(u_4)=\ord_3(u_n)$, verifying $\eqref{lem3.2}$ for $\ord_3(n)=0$. Now for all $n\in\mathbb{N}$ such that $3\mid u_n$ we can see by \eqref{lem1.4} that $\ord_3(u_{3n})=\ord_3(u_n)+1$ and \eqref{lem10.2} now follows by induction on $\ord_3(n)$.
\newline
\newline
Now suppose that $2\nmid n$. By \eqref{lem1.1} we have $v_n=\frac{u_{2n}}{u_n}$. Since $2\nmid n$ we have $3\nmid u_n$ by \eqref{lem10.1}. Thus, $\ord_3\left(v_n\right)=\ord_3\left(u_{2n}\right)=\ord_3 n+\ord_3P$ by \eqref{lem10.2}.
\end{proof}
\begin{lemma}
For all $k\geq\delta$ we have
\begin{equation*}
h_u\left(3^k\right)=h_v\left(3^k\right)=2\cdot 3^{k-\delta}.
\end{equation*}
\end{lemma}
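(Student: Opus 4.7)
The plan is to mimic the proof of the analogous period-length lemma for the $3\nmid P$ case above, with Lemma \ref{lem10} playing the role of Lemma \ref{lem3}. Both $h_u\left(3^k\right)$ and $h_v\left(3^k\right)$ should equal $2\cdot 3^{k-\delta}$, which is the smallest positive index at which $3^k$ divides $u_n$.

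For $h_u\left(3^k\right)$, I will first note by \eqref{lem10.1} that it must be even, and by \eqref{lem10.2} that the smallest positive $n$ with $3^k\mid u_n$ is $n=2\cdot 3^{k-\delta}$; hence $h_u\left(3^k\right)$ is a positive multiple of $2\cdot 3^{k-\delta}$. It therefore suffices to show $u_{2\cdot 3^{k-\delta}+1}\equiv 1\pmod{3^k}$. Setting $m=3^{k-\delta}$ (which is odd), \eqref{lem2.2} applied to $s=2m+1$ and $t=1$ gives $u_{2m+1}+1=u_m v_{m+1}$. I then plan to pivot via \eqref{lem1.3} to the identity $u_m v_{m+1}=v_{2m}-u_{m-1}v_m$: since $m$ is odd, \eqref{lem1.2} gives $v_{2m}=v_m^2+2$, and since $\ord_3 v_m=k$ by \eqref{lem10.4}, this yields $v_{2m}\equiv 2\pmod{3^{2k}}$; the leftover term satisfies $\ord_3\left(u_{m-1}v_m\right)\geq \delta+k\geq k$ by \eqref{lem10.2} and \eqref{lem10.4} (with $u_{m-1}=u_0=0$ in the degenerate case $k=\delta$), so $u_m v_{m+1}\equiv 2\pmod{3^k}$ and $u_{2m+1}\equiv 1\pmod{3^k}$ as desired.

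For $h_v\left(3^k\right)$, I will check the upper bound analogously: \eqref{lem1.2} immediately gives $v_{2\cdot 3^{k-\delta}}=v_m^2+2\equiv 2\pmod{3^{2k}}$, while \eqref{lem2.3} applied to $s=2m+1$ and $t=1$ produces $v_{2m+1}-P=v_{m+1}v_m$ of $3$-adic valuation $k$, giving $v_{2m+1}\equiv P\pmod{3^k}$. Hence $h_v\left(3^k\right)\mid 2\cdot 3^{k-\delta}$. For the matching lower bound, $h_v$ must be even (else \eqref{lem10.3} forces $3\mid v_{h_v}$, contradicting $v_{h_v}\equiv 2\pmod 3$), so $h_v=2\cdot 3^j$ with $0\leq j\leq k-\delta$; for $j<k-\delta$, the same \eqref{lem2.3} calculation with $m=3^j$ produces $v_{2\cdot 3^j+1}-P=v_{3^j+1}v_{3^j}$ of $3$-adic valuation exactly $j+\delta<k$, contradicting $v_{2\cdot 3^j+1}\equiv P\pmod{3^k}$.

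The main obstacle is extracting the congruence $u_{2\cdot 3^{k-\delta}+1}\equiv 1\pmod{3^k}$. In the $3\nmid P$ setting the corresponding factorization given by \eqref{lem2.2} had one factor already carrying enough $3$-adic weight, making the congruence immediate. Here both $u_m$ and $v_{m+1}$ are coprime to $3$, so the congruence must be obtained indirectly by pivoting through \eqref{lem1.3} to introduce $v_{2m}$, which via \eqref{lem1.2} inherits the much stronger congruence $v_{2m}\equiv 2\pmod{3^{2k}}$ coming from the squaring of $v_m$.
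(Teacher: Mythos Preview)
Your argument is correct, but for $h_u$ you have taken an unnecessarily long detour. With $m=3^{k-\delta}$ odd, the paper applies \eqref{lem2.1} rather than \eqref{lem2.2}: since $(-1)^m=-1$, one gets directly
\[
u_{2m+1}-u_1=u_{m+1}v_m,
\]
and $\ord_3 v_m=k$ by \eqref{lem10.4} immediately gives $u_{2m+1}\equiv 1\pmod{3^k}$. Your choice of \eqref{lem2.2} produced the factorization $u_{2m+1}+1=u_m v_{m+1}$ with both factors coprime to $3$, forcing the pivot through \eqref{lem1.3} and \eqref{lem1.2}. That workaround is valid, but the ``obstacle'' you identify is self-inflicted: swapping to the companion identity \eqref{lem2.1} dissolves it in one line. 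For $h_v$ the two arguments are essentially the same; your lower-bound step (ruling out $h_v=2\cdot 3^j$ with $j<k-\delta$ via $\ord_3\bigl(v_{2\cdot 3^j+1}-P\bigr)=j+\delta$) is in fact more explicit than the paper's treatment.
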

\begin{proof}
Let $k\geq\delta$. From \eqref{lem10.1} we have $2\mid h_u\left(3^k\right)$. For all $m\in\mathbb{Z}$ we have $3^k\mid u_{2m}$ if and only if $3^{k-\delta}\mid m$ by \eqref{lem10.2}. By \eqref{lem2.1} we have $u_{2\cdot 3^{k-\delta}+1}-u_1=u_{3^{k-\delta}+1}v_{3^{k-\delta}}$. By \eqref{lem10.4} $3^k\mid u_{2\cdot 3^{k-\delta}+1}-u_1$. It follows that $h_u\left(3^k\right)=2\cdot 3^{k-\delta}$.
\newline
\newline
From \eqref{lem10.3} we have $2\mid h_v\left(3^k\right)$. For all odd $m\in\mathbb{Z}$ we have $v_{2m}-v_0=v_{m}^2$ by \eqref{lem2.4}. From \eqref{lem10.4} we have $3^k\mid v_{m}^2$ if and only if $3^{\lceil k/2\rceil}\mid v_{m}$ if and only if $3^{\lceil k/2\rceil-\delta}\mid m$. By \eqref{lem2.1} we have $v_{2\cdot 3^{k-\delta}+1}-v_1=v_{3^{k-\delta}+1}v_{3^{k-\delta}}$. By \eqref{lem10.4} $3^k\mid u_{2\cdot 3^{k-\delta}+1}-u_1$. It follows that $h_u\left(3^k\right)=2\cdot 3^{k-\delta}$.
\end{proof}
\begin{note}
Let $x=2\delta-1$ and $J(k):=3^{2\lfloor(k-x)/4\rfloor+1}$.
\end{note}
\begin{lemma}\label{lem12}
Let $k\in\mathbb{N}$, $k\geq 2\delta-1$, and $n\in\mathbb{N}$. Then
\begin{equation}\label{lem12.1}
u_n\equiv u_{J(k)}\pmod{3^k}
\end{equation}
if and only if $n=J(k)+2\cdot 3^{\left\lfloor(k-x)/2\right\rfloor}j$ for some $j\in\mathbb{Z}$. Also,
\begin{equation}\label{lem12.2}
v_n\equiv v_0\pmod{3^k}.
\end{equation}
if and only if $n=2\cdot 3^{\left\lfloor(k-x)/2\right\rfloor}j$ for some $j\in\mathbb{Z}$.
\end{lemma}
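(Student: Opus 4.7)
I plan to prove Lemma \ref{lem12} by mirroring the proof of Lemma \ref{lem7}, with Lemma \ref{lem10} playing the role of Lemma \ref{lem3}. The main structural adjustment is the parity picture: in the present setting $3\mid u_n\Leftrightarrow 2\mid n$ and $3\mid v_n\Leftrightarrow 2\nmid n$, so the roles of ``even'' and ``odd'' indices are interchanged relative to the $3\nmid P$ case. Since $J(k)=3^{2\lfloor(k-x)/4\rfloor+1}$ is odd and $v_0=2$ is coprime to $3$, one knows from the outset that any solution $n$ of \eqref{lem12.1} is odd, and any solution $n$ of \eqref{lem12.2} is even.

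For the forward direction of \eqref{lem12.1}, set $n=J(k)+2\cdot 3^bj$ with $b=\lfloor(k-x)/2\rfloor$, so $n$ is odd. I would apply \eqref{lem2.2} when $j$ is even, and \eqref{lem2.1} when $j$ is odd (so that the factor $(-1)^{(n-J(k))/2}$ comes out to $+1$ in both cases). Either way one obtains $u_n-u_{J(k)}=u_Av_B$ with $A$ even, $B$ odd, and $\{A,B\}=\{3^bj,\,J(k)+3^bj\}$. Then \eqref{lem10.2} gives $\ord_3 u_A=\ord_3 A+\delta\geq b+\delta$ and \eqref{lem10.4} gives $\ord_3 v_B=\ord_3 B+\delta\geq b+\delta$, since $\ord_3(J(k))\in\{b,b+1\}$. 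Summing and using $2\delta=x+1$ yields $\ord_3(u_Av_B)\geq 2b+2\delta\geq k$. The forward direction of \eqref{lem12.2} is cleaner: write $n=2m$ and split on the parity of $m$, using \eqref{lem2.4} when $m$ is even (giving $v_n-v_0=Du_m^2$) or \eqref{lem2.3} when $m$ is odd (giving $v_n-v_0=v_m^2$); in either subcase Lemma \ref{lem10} plus $2\delta=x+1$ closes the valuation bookkeeping.

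For the converses, one reverses each step. In \eqref{lem12.1}, $n$ odd makes $n\pm J(k)$ both even, so exactly one of \eqref{lem2.1}, \eqref{lem2.2} gives a factorisation $u_n-u_{J(k)}=u_Av_B$ with $A$ even, $B$ odd, and $\{A,B\}\subset\{(n\pm J(k))/2\}$. From $3^k\mid u_Av_B$ it follows that either $\ord_3 u_A$ or $\ord_3 v_B$ is $\geq\lfloor(k+1)/2\rfloor$, and by \eqref{lem10.2} and \eqref{lem10.4} this becomes $\ord_3 A\geq b$ or $\ord_3 B\geq b$, where one checks $\lfloor(k+1)/2\rfloor-\delta=\lfloor(k-x)/2\rfloor=b$. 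Combining with $A+B=n$ and $B-A=\pm J(k)$ and using that $\ord_3 J(k)\in\{b,b+1\}$ yields $n=J(k)+2\cdot 3^bj$ for some $j\in\mathbb{Z}$. For the converse of \eqref{lem12.2}, write $n=2m$ and run the same identity as in the forward direction according to the parity of $m$; in either case $3^{\lceil k/2\rceil}$ must divide $u_m$ or $v_m$, which by Lemma \ref{lem10} forces $\ord_3 m\geq b$, hence $n=2\cdot 3^bj$.

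The main obstacle I anticipate is the sign/shift bookkeeping in the converse of \eqref{lem12.1}: when the high-valuation factor is $v_B$ (rather than $u_A$), the resulting $n$ appears naturally in the form $-J(k)+2\cdot 3^br$ instead of $J(k)+2\cdot 3^bj$, and one must absorb $2J(k)$ into the $2\cdot 3^bj$ parametrisation. This works precisely because $J(k)/3^b\in\{1,3\}$, so $2J(k)$ is itself a multiple of $2\cdot 3^b$ and only shifts $j$ by $\pm 1$ or $\pm 3$; the same observation is what made the parity/sign analysis go through in Lemma \ref{lem7}, and it carries over here once the $3\mid P$ parity dictionary has been correctly installed.
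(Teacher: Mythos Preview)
Your plan is correct and mirrors the paper's own proof essentially step for step: the same parity split on $j$ (resp.\ on $m$) using \eqref{lem2.1}/\eqref{lem2.2} (resp.\ \eqref{lem2.3}/\eqref{lem2.4}) in the forward direction, and the same pigeonhole on the factor of $3^{\lfloor(k+1)/2\rfloor}$ in the converse, with Lemma~\ref{lem10} supplying the valuations. Your treatment of the reparametrisation from $-J(k)+2\cdot 3^b r$ to $J(k)+2\cdot 3^b j$ via $J(k)/3^b\in\{1,3\}$ is in fact more explicit than the paper, which simply records $3^{\lfloor(k-x)/2\rfloor}\mid n$ and leaves the final (easy) step to the reader.
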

\begin{proof}
First suppose $j$ is even. By \eqref{lem2.2} we have
\begin{equation*}
u_{J(k)+2\cdot 3^{\left\lfloor(k-x)/2\right\rfloor}j}-u_{J(k)}=u_{3^{\left\lfloor(k-x)/2\right\rfloor}j}v_{J(k)+3^{\left\lfloor(k-x)/2\right\rfloor}j}.
\end{equation*}
By \eqref{lem10.2} we have $\ord_3\left(u_{3^{\left\lfloor(k-x)/2\right\rfloor}j}\right)=\left\lfloor(k-x)/2\right\rfloor+\delta$. Also, by \eqref{lem10.4} we have
$\ord_3\left(v_{J(k)+3^{\left\lfloor(k-x)/2\right\rfloor}j}\right)\geq\left\lfloor(k-x)/2\right\rfloor+\delta$. Thus, $\ord_3\left(u_{J(k)+2\cdot 3^{(k-x)/2}j}-u_{J(k)}\right)\geq 2\left\lfloor(k-x)/2\right\rfloor+2\delta\geq k-x-1+2\delta=k$. Thus, \eqref{lem12.1} follows.
\newline
\newline
Now suppose $j$ is odd. By \eqref{lem2.1} we have
\begin{equation*}
u_{J(k)+2\cdot 3^{(k-x)/2}j}-u_{J(k)}=u_{J(k)+3^{\left\lfloor(k-x)/2\right\rfloor}j}v_{3^{\left\lfloor(k-x)/2\right\rfloor}j}.
\end{equation*}
By \eqref{lem10.2} we have $\ord_3\left(u_{J(k)+3^{\left\lfloor(k-x)/2\right\rfloor}}\right)=\left\lfloor(k-x)/2\right\rfloor+\delta$. Also, by \eqref{lem10.4} we have
$\ord_3\left(v_{3^{\left\lfloor(k-x)/2\right\rfloor}j}\right)\geq\left\lfloor(k-x)/2\right\rfloor+\delta$. Thus, $\ord_3\left(u_{J(k)+2\cdot 3^{(k-x)/2}j}-u_{J(k)}\right)=2\left\lfloor(k-x)/2\right\rfloor+2\delta\geq k-x-1+2\delta=k$. Thus, \eqref{lem12.1} follows.
\newline
\newline
For the converse, suppose $\eqref{lem12.1}$ holds for some $n$. Since $J(k)$ is odd, we have $3\nmid u_{J(k)}$ by \eqref{lem10.1}. So $3\nmid u_n$ and $n$ is odd again by \eqref{lem10.1}. Since $3^k\mid u_n-u_{J(k)}$, we have that $3^{\lfloor(k+1)/2\rfloor}$ divides at least one of the following by \eqref{lem2.1} and \eqref{lem2.2}: $u_{\frac{n+3^{2\lfloor(k-x/4\rfloor+1}}{2}}$, $u_{\frac{n-3^{2\lfloor(k-x/4\rfloor+1}}{2}}$,$v_{\frac{n+3^{2\lfloor(k-x/4\rfloor+1}}{2}}$, $v_{\frac{n-3^{2\lfloor(k-x/4\rfloor+1}}{2}}$. In all four cases it follows that either $3^{\lfloor(k+1)/2\rfloor-\delta}\mid n+3^{2\lfloor(k-x)/4\rfloor+1}$ or $3^{\lfloor(k+1)/2\rfloor-\delta}\mid n-3^{2\lfloor(k-x)/4\rfloor+1}$ by Lemma \ref{lem10}. Since $2\lfloor(k-x)/4\rfloor+1\geq\left\lfloor(k-x)/2\right\rfloor=\lfloor(k+1)/2\rfloor-\delta$ it follows that $3^{\left\lfloor(k-x)/2\right\rfloor}\mid n$.
\newline
\newline
By \eqref{lem2.4} we have 
\begin{equation*}
v_{2\cdot 3^{\left\lfloor(k-x)/2\right\rfloor}j}-v_0=Du_{3^{\left\lfloor(k-x)/2\right\rfloor}j}^2
\end{equation*}
if $j$ is even and by \eqref{lem2.3} we have
\begin{equation*}
v_{2\cdot 3^{\left\lfloor(k-x)/2\right\rfloor}j}-v_0=v_{3^{\left\lfloor(k-x)/2\right\rfloor}j}^2
\end{equation*}
if $j$ is odd. Using $\ord_3\left(u_{3^{\left\lfloor(k-x)/2\right\rfloor}j}\right)=\left\lfloor(k-x)/2\right\rfloor+\delta=\lfloor(k+1)/2\rfloor$ if $j$ is even and $\ord_3\left(v_{3^{\left\lfloor(k-x)/2\right\rfloor}j}\right)=\left\lfloor(k-x)/2\right\rfloor+\delta=\lfloor(k+1)/2\rfloor$ if $j$ is odd we obtain \eqref{lem12.2}.
\newline
\newline
For the converse, suppose \eqref{lem12.2} holds for some even $n$. Since $3\nmid v_0$, we have that $3\nmid v_n$. Thus, $n$ is even by \eqref{lem10.3}. Let $n=2m$. By \eqref{lem2.4} we have 
\begin{equation*}
v_n-v_0=Du_m^2
\end{equation*}
if $m$ even and by \eqref{lem2.3} we have
\begin{equation*}
v_n-v_0=v_m^2
\end{equation*}
is $m$ is odd. Thus, $3^{\lfloor(k+1)/2\rfloor}\mid u_m$ or $3^{\lfloor(k+1)/2\rfloor}\mid v_m$. It follows that $m=3^{\left\lfloor(k-x)/2\right\rfloor}j$ for some $j\in\mathbb{Z}$ by \eqref{lem3.1}. Thus, $n=2\cdot 3^{\left\lfloor(k-x)/2\right\rfloor}j$.
\end{proof}
\begin{prop}\label{prop4}
Let $k\in\mathbb{N}$, $k\geq 2\delta-1$. Then $v_u\left(3^k,u_{J(k)}\right)=v_v\left(3^k,v_0\right)=3^{\left\lfloor k/2\right\rfloor}$.
\end{prop}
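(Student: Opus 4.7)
The plan is to apply Lemma \ref{lem12} and then count residue classes within a single period. Since $h_u(3^k) = h_v(3^k) = 2 \cdot 3^{k-\delta}$, computing $v_u(3^k, u_{J(k)})$ and $v_v(3^k, v_0)$ amounts to counting the number of $n \in \{0, 1, \ldots, 2 \cdot 3^{k-\delta} - 1\}$ satisfying, respectively, $u_n \equiv u_{J(k)} \pmod{3^k}$ and $v_n \equiv v_0 \pmod{3^k}$. Lemma \ref{lem12} already identifies each solution set as an arithmetic progression of common difference $2 \cdot 3^{\lfloor (k-x)/2\rfloor}$, so the counts come from straightforward interval bookkeeping.

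For the $u$-sequence, Lemma \ref{lem12} gives that the solutions are precisely the integers of the form $J(k) + 2 \cdot 3^{\lfloor(k-x)/2\rfloor} j$ with $j \in \mathbb{Z}$. Hence I would count the $j \in \mathbb{Z}$ with
\begin{equation*}
-\frac{J(k)}{2 \cdot 3^{\lfloor(k-x)/2\rfloor}} \leq j < \frac{2 \cdot 3^{k-\delta} - J(k)}{2 \cdot 3^{\lfloor(k-x)/2\rfloor}}.
\end{equation*}
The length of this interval is exactly $3^{k-\delta - \lfloor(k-x)/2\rfloor}$, and because $2 \cdot 3^{\lfloor(k-x)/2\rfloor}$ divides $2 \cdot 3^{k-\delta}$ this length is an integer, so the count is precisely $3^{k-\delta - \lfloor(k-x)/2\rfloor}$. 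The $v$-sequence case is handled identically: the solutions are the progression $\{2 \cdot 3^{\lfloor(k-x)/2\rfloor} j : j \in \mathbb{Z}\}$, and the number of terms in $[0, 2 \cdot 3^{k-\delta})$ is again $3^{k-\delta - \lfloor(k-x)/2\rfloor}$.

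The final step is to verify the arithmetic identity $k - \delta - \lfloor(k-x)/2\rfloor = \lfloor k/2\rfloor$ with $x = 2\delta - 1$. Since $2\delta$ is even we have $\lfloor(k-2\delta+1)/2\rfloor = \lfloor(k+1)/2\rfloor - \delta$, so $\delta + \lfloor(k-x)/2\rfloor = \lfloor(k+1)/2\rfloor$, and together with $k - \lfloor(k+1)/2\rfloor = \lfloor k/2\rfloor$ this gives the identity, exactly as in the proof of Proposition \ref{prop1}. This pins both counts to $3^{\lfloor k/2\rfloor}$ and completes the proof.

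There is no real obstacle here: the proposition is essentially a direct corollary of Lemma \ref{lem12} once the period length and the floor identity are in hand. The only mild care needed is to confirm that the endpoints of the counting interval are rational with integer difference, which is automatic because the step $2 \cdot 3^{\lfloor(k-x)/2\rfloor}$ divides the period $2 \cdot 3^{k-\delta}$ (equivalent to $\lfloor(k-x)/2\rfloor \leq k - \delta$, true for $k \geq 2\delta - 1$).
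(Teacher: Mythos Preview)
Your proof is correct and follows essentially the same approach as the paper: apply Lemma~\ref{lem12} to identify the solution set as an arithmetic progression with common difference $2\cdot 3^{\lfloor(k-x)/2\rfloor}$, then divide the period length $2\cdot 3^{k-\delta}$ by this step to obtain $3^{\lfloor k/2\rfloor}$. The paper compresses this into a single displayed quotient, whereas you spell out the interval counting and the floor identity $k-\delta-\lfloor(k-x)/2\rfloor=\lfloor k/2\rfloor$; the substance is identical.
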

\begin{proof}
This follows directly from Lemma \ref{lem12} noting that
\begin{equation*}
\frac{2\cdot 3^{k-\delta}}{2\cdot 3^{\left\lfloor(k-x)/2\right\rfloor}}=3^{\left\lfloor k/2\right\rfloor}.
\end{equation*}
\end{proof}
\begin{lemma}\label{lem13}
Let $l,n\in\mathbb{N}$, $l\geq\delta$. Then
\begin{equation}\label{lem13.1}
u_n\equiv u_{3^{l-\delta}}\pmod{3^{2l+1}}
\end{equation}
if and only if $n\equiv\pm 3^{l-\delta}\pmod{2\cdot 3^{l-\delta+1}}$. Also,
\begin{equation}\label{lem13.2}
v_n\equiv v_{2\cdot 3^{l-\delta}}\pmod{3^{2l+1}}.
\end{equation}
if and only if $n\equiv\pm 2\cdot 3^{l-\delta}\pmod{2\cdot 3^{l-\delta+1}}$.
\end{lemma}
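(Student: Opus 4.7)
The plan is to mimic the template of Lemma \ref{lem8}, but with the identities of Lemma \ref{lem2} applied to indices of the opposite parity. Since by \eqref{lem10.1} we now have $3\mid u_n$ exactly when $n$ is even, the role of even/odd indices is swapped throughout. I will repeatedly use $3\nmid u_{3^{l-\delta}}$ (as $3^{l-\delta}$ is odd), $3\nmid v_{2\cdot 3^{l-\delta}}$ (as $2\cdot 3^{l-\delta}$ is even), and $3\nmid D=P^2+4$ whenever $3\mid P$.

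For the forward direction of \eqref{lem13.1}, write $n=\epsilon\cdot 3^{l-\delta}+2\cdot 3^{l-\delta+1}j$ with $\epsilon\in\{\pm 1\}$, so $n$ is odd and has the same parity as $3^{l-\delta}$. Depending on the parity of $(n-3^{l-\delta})/2$, apply either \eqref{lem2.2} (if even) or \eqref{lem2.1} (if odd) with $s=n$, $t=3^{l-\delta}$, so that the left-hand side is exactly $u_n-u_{3^{l-\delta}}$. In each subcase the factorization takes the form $u_Av_B$ where $\{A,B\}=\{(n-3^{l-\delta})/2,(n+3^{l-\delta})/2\}$ with $A$ even and $B$ odd; evaluating $\ord_3$ via \eqref{lem10.2} and \eqref{lem10.4} yields a contribution of at least $(l-\delta+1)+(l-\delta)+2\delta=2l+1$ after inserting the residue condition on $n$.

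For the converse, the hypothesis together with $3\nmid u_{3^{l-\delta}}$ forces $n$ to be odd, and the same factorization $u_n-u_{3^{l-\delta}}=u_Av_B$ holds. Because \eqref{lem10.2} and \eqref{lem10.4} give \emph{exact} $3$-adic valuations, one obtains the equality
\begin{equation*}
\ord_3\left(u_n-u_{3^{l-\delta}}\right)=\ord_3\left(n-3^{l-\delta}\right)+\ord_3\left(n+3^{l-\delta}\right)+2\delta.
\end{equation*}
Since $(n+3^{l-\delta})-(n-3^{l-\delta})=2\cdot 3^{l-\delta}$ has $\ord_3$ exactly $l-\delta$, the requirement that the left-hand side be at least $2l+1$ forces at least one of $\ord_3(n\pm 3^{l-\delta})$ to be $\geq l-\delta+1$; that is, $n\equiv\pm 3^{l-\delta}\pmod{3^{l-\delta+1}}$. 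Combined with $n$ being odd, this promotes to $n\equiv\pm 3^{l-\delta}\pmod{2\cdot 3^{l-\delta+1}}$ as claimed.

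For \eqref{lem13.2} the argument is analogous, now using \eqref{lem2.3} and \eqref{lem2.4} with $s=n$, $t=2\cdot 3^{l-\delta}$. Writing $n=2m$, the difference $v_n-v_{2\cdot 3^{l-\delta}}$ factors as $v_{m+3^{l-\delta}}v_{m-3^{l-\delta}}$ when $m$ is even and as $Du_{m+3^{l-\delta}}u_{m-3^{l-\delta}}$ when $m$ is odd; in either case Lemma \ref{lem10} produces the valuation identity $\ord_3(v_n-v_{2\cdot 3^{l-\delta}})=\ord_3(m-3^{l-\delta})+\ord_3(m+3^{l-\delta})+2\delta$, and the identical analysis closes the argument. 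The main obstacle is the sign bookkeeping in the forward direction, namely verifying that the chosen identity from Lemma \ref{lem2} genuinely produces $u_n-u_{3^{l-\delta}}$ (respectively $v_n-v_{2\cdot 3^{l-\delta}}$) rather than the corresponding sum; once the parity cases are correctly set up, both directions reduce to routine $3$-adic valuation estimates driven by Lemma \ref{lem10}.
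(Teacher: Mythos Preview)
Your proposal is correct and follows essentially the same route as the paper: factor $u_n-u_{3^{l-\delta}}$ (resp.\ $v_n-v_{2\cdot 3^{l-\delta}}$) via the appropriate identity from Lemma~\ref{lem2} according to the parity of $(n-3^{l-\delta})/2$ (resp.\ of $m=n/2$), and then read off the $3$-adic valuation from Lemma~\ref{lem10}. The one point where your write-up is slightly cleaner than the paper is the converse: rather than listing the four quantities $u_{(n\pm 3^{l-\delta})/2}$, $v_{(n\pm 3^{l-\delta})/2}$ and arguing that $3^{l+1}$ must divide one of them, you package everything into the single identity
\[
\ord_3\bigl(u_n-u_{3^{l-\delta}}\bigr)=\ord_3\bigl(n-3^{l-\delta}\bigr)+\ord_3\bigl(n+3^{l-\delta}\bigr)+2\delta,
\]
valid because in each parity case the $u$-factor has even index and the $v$-factor has odd index, so \eqref{lem10.2} and \eqref{lem10.4} both apply exactly. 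This is a nice way to organize the case split, but it is not a different argument.
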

\begin{proof}
If $i=1,5$ and $j$ is even, then we have
\begin{equation*}
\ord_3\left(u_{3^{l-\delta}i+2\cdot 3^{l-\delta+1}j}-u_{3^{l-\delta}}\right)=\ord_3\left(u_{\frac{3^{l-\delta}(i-1)}{2}+3^{l-\delta+1}j}v_{\frac{3^{l-\delta}(i+1)}{2}+3^{l-\delta+1}j}\right)\geq 2l+1
\end{equation*}
by \eqref{lem2.2}, \eqref{lem3.2}, and \eqref{lem3.4}. More specifically, if $i=1$, then $\ord_3\left(u_{\frac{3^{l-\delta}(i-1)}{2}+3^{l-\delta+1}j}\right)\geq l+1$ and 
\newline
$\ord_3\left(v_{\frac{3^{l-\delta}(i+1)}{2}+3^{l-\delta+1}j}\right)=l$, while if $i=5$, then $\ord_3\left(u_{\frac{3^{l-\delta}(i-1)}{2}+3^{l-\delta+1}j}\right)=l$ and
\newline
$\ord_3\left(v_{\frac{3^{l-\delta}(i+1)}{2}+3^{l-\delta+1}j}\right)\geq l+1$.
\newline
\newline
Similarly, if $i=1,5$ and $j$ is odd, then we have
\begin{equation*}
\ord_3\left(u_{2\cdot 3^{l-\delta}i+2\cdot 3^{l-\delta+1}j}-u_{2\cdot 3^{l-\delta}}\right)=\ord_3\left(v_{3^{l-\delta}(i-1)+3^{l-\delta+1}j}u_{3^{l-\delta}(i+1)+3^{l-\delta+1}j}\right)\geq 2l+1
\end{equation*}
by \eqref{lem2.1}, \eqref{lem3.2}, and \eqref{lem3.4}. Hence, we have \eqref{lem13.1}.
\newline
\newline
For the converse, suppose $\eqref{lem13.1}$ holds for some $n$. Since $3^{l-\delta}$ is odd, we have $3\nmid u_{3^{l-\delta}}$ by \eqref{lem10.1}. So $3\nmid u_n$ and $n$ is odd again by \eqref{lem10.1}. Since $3^{2l+1}\mid u_n-u_{3^{l-\delta}}$, we have that $3^{l+1}$ divides at least one of the following by \eqref{lem2.1} and \eqref{lem2.2}: $u_{\frac{n+3^{l-\delta}}{2}}$, $u_{\frac{n-3^{l-\delta}}{2}}$,$v_{\frac{n+3^{l-\delta}}{2}}$, $v_{\frac{n-3^{l-\delta}}{2}}$. In all four cases it follows that either $3^{l+1-\delta}\mid n+3^{l-\delta}$ or $3^{l+1-\delta}\mid n-3^{l-\delta}$ by Lemma \ref{lem10}. Thus, $3^{l-\delta}\mathrel\Vert n$ and since $n$ is odd, $n=\pm 3^{l-\delta}\pmod{2\cdot 3^{l-\delta+1}}$.
\newline
\newline
Similarly, if $i=\pm 1$ and $j$ even, we have
\begin{equation*}
\ord_3\left(v_{2\cdot 3^{l-\delta}i+2\cdot 3^{l-\delta+1}j}-v_{2\cdot 3^{l-\delta}}\right)=\ord_3\left(Du_{3^{l-\delta}(i-1)+3^{l-\delta+1}j}u_{3^{l-\delta}(i+1)+3^{l-\delta+1}j}\right)\geq 2l+1
\end{equation*}
and if $i=\pm 1$ and $j$ is odd, we have
\begin{equation*}
\ord_3\left(v_{2\cdot 3^{l-\delta}i+2\cdot 3^{l-\delta+1}j}-v_{2\cdot 3^{l-\delta}}\right)=\ord_3\left(v_{3^{l-\delta}(i-1)+3^{l-\delta+1}j}v_{3^{l-\delta}(i+1)+3^{l-\delta+1}j}\right)\geq 2l+1
\end{equation*}
by \eqref{lem2.3}, \eqref{lem2.4}, \eqref{lem3.2}, and \eqref{lem3.4} with both cases considered separately as before. Hence, we have \eqref{lem13.2}.
\newline
\newline
For the converse, suppose $\eqref{lem13.2}$ holds for some $n$. We have $3\nmid v_{2\cdot 3^{l-\delta}}$ by \eqref{lem10.1}. So $3\nmid v_n$ and $n$ is even again by \eqref{lem10.3}. Since $3^{2l+1}\mid v_n-v_{3^{l-\delta}}$, we have that $3^{l+1}$ divides at least one of the following by \eqref{lem2.3} and \eqref{lem2.4}: $u_{\frac{n+2\cdot 3^{l-\delta}}{2}}$, $u_{\frac{n-2\cdot 3^{l-\delta}}{2}}$,$v_{\frac{n+2\cdot 3^{l-\delta}}{2}}$, $v_{\frac{n-2\cdot 3^{l-\delta}}{2}}$. In all four cases it follows that either $3^{l+1-\delta}\mid n+2\cdot 3^{l-\delta}$ or $3^{l+1-\delta}\mid n-2\cdot 3^{l-\delta}$ by Lemma \ref{lem10}. Thus, $3^{l-\delta}\mathrel\Vert n$ and since $n$ is even, $n=\pm 2\cdot 3^{l-\delta}\pmod{2\cdot 3^{l-\delta+1}}$.
\end{proof}
\begin{prop}\label{prop5}
Let $k\geq 2\delta-1$.
\newline
1) Let $n\in\mathbb{N}$ be such that $u_n\equiv u_{3^{l-\delta}}\pmod{3^{2l+1}}$, where $\delta\leq l\leq\lfloor\frac{k-1}{2}\rfloor$. Then $v_u\left(3^k,u_n\right)=2\cdot 3^l$.
\newline
\newline
2) Let $n\in\mathbb{N}$ be such that $v_n\equiv v_{2\cdot 3^{l-\delta}}\pmod{3^{2l+1}}$, where $\delta\leq l\leq\lfloor\frac{k-1}{2}\rfloor$. Then $v_u\left(3^k,v_n\right)=2\cdot 3^l$.
\end{prop}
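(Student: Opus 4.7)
The plan is to follow the approach of Proposition \ref{prop2}: use Lemma \ref{lem13} to identify the candidate set of indices, then track the $3$-adic valuation of the differences $u_n - u_m$ (respectively $v_n - v_m$) via the addition formulas in Lemma \ref{lem2} together with Lemma \ref{lem10}. Since $u_m \equiv u_n \pmod{3^k}$ forces $u_m \equiv u_{3^{l-\delta}} \pmod{3^{2l+1}}$, Lemma \ref{lem13} tells us that $n$ and $m$ must have the form $n = a \cdot 3^{l-\delta} + 2\cdot 3^{l-\delta+1} j_0$ and $m = b \cdot 3^{l-\delta} + 2\cdot 3^{l-\delta+1} r$ with $a, b \in \{1, 5\}$ and $0 \leq r < 3^{k-l-1}$, giving $2\cdot 3^{k-l-1}$ candidates. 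The objective is to show that each of the cases $a = b$ and $a \neq b$ contributes exactly $3^l$ valid $m$'s, yielding the claimed total of $2 \cdot 3^l$.

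Since $n$ and $m$ are both odd (by \eqref{lem10.1}, as $3 \nmid u_n$), $(n \pm m)/2$ are integers, and which of \eqref{lem2.1} or \eqref{lem2.2} to use for factoring $u_n - u_m$ depends on the parity of $(n-m)/2$. In the case $a = b$, one computes $(n-m)/2 = 3^{l-\delta+1}(j_0 - r)$ and $(n+m)/2 = 3^{l-\delta}(a + 3(j_0 + r))$; the parities of these two indices swap according to the parity of $j_0 - r$, but in either parity subcase, applying \eqref{lem10.2} to the even-indexed factor and \eqref{lem10.4} to the odd-indexed factor yields the same uniform formula $\ord_3(u_n - u_m) = 2l + 1 + \ord_3(j_0 - r)$. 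The condition $3^k \mid u_n - u_m$ therefore reduces to $3^{k-2l-1} \mid j_0 - r$, admitting exactly $3^l$ solutions $r$ in the range. The case $a \neq b$ (say $a = 1$, $b = 5$) is handled symmetrically: $(n-m)/2 = 3^{l-\delta}(-2 + 3(j_0 - r))$ and $(n+m)/2 = 3^{l-\delta+1}(1 + j_0 + r)$, the same parity swap occurs, and the uniform formula $\ord_3(u_n - u_m) = 2l + 1 + \ord_3(1 + j_0 + r)$ gives another $3^l$ valid values of $r$.

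Part 2 proceeds analogously, using \eqref{lem2.3} and \eqref{lem2.4} with candidate indices $m = a' \cdot 3^{l-\delta} + 2\cdot 3^{l-\delta+1} r$ for $a' \in \{2, 4\}$ (both even, since $3 \nmid v_{2\cdot 3^{l-\delta}}$ forces $m$ to be even by \eqref{lem10.3}). The main obstacle I anticipate is purely bookkeeping: verifying that the parity-dependent case split on $(n-m)/2$ leaves the resulting $\ord_3$ expression unchanged. Once this uniformity is established, which amounts to checking that swapping the roles of the even-indexed and odd-indexed factors between \eqref{lem2.1} and \eqref{lem2.2} exchanges the contributions of \eqref{lem10.2} and \eqref{lem10.4} in a balanced way, the counting reduces to a routine divisibility tally.
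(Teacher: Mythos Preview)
Your proposal is correct and follows essentially the same approach as the paper: invoke Lemma~\ref{lem13} to parametrize $n$ and $m$ as $a\cdot 3^{l-\delta}+2\cdot 3^{l-\delta+1}j_0$ and $b\cdot 3^{l-\delta}+2\cdot 3^{l-\delta+1}r$ with $a,b\in\{1,5\}$, split into $a=b$ and $a\neq b$, handle the parity-dependent choice between \eqref{lem2.1} and \eqref{lem2.2} via Lemma~\ref{lem10}, and reduce to a divisibility count yielding $3^l$ values of $r$ in each case. Your write-up is in fact slightly more explicit than the paper's, which dispatches the $a\neq b$ case with ``similar'' and leaves Part~2 entirely to the reader.
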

\begin{proof}
1) Let $n\in\mathbb{N}$ be such that $u_n\equiv u_{3^{l-\delta}}\pmod{3^{2l+1}}$. By Lemma \ref{lem13} $n$ has the form $a\cdot 3^{l-\delta}+2\cdot 3^{l-\delta+1}j$, where $a=1,5$. If $u_n\equiv u_m\pmod{3^k}$, then $u_n\equiv u_m\pmod{3^{2l+1}}$, so that $m$ must be of the form $b\cdot 3^{l-\delta}+2\cdot 3^{l-\delta+1}r$, where $b=1,5$, again by Lemma \ref{lem13}. For all such values of $m$ we have
\begin{equation}\label{prop5case1}
u_n-u_m=u_{\frac{(a-b)\cdot 3^{l-\delta}}{2}+3^{l-\delta+1}(j-r)}v_{\frac{(a+b)\cdot 3^{l-\delta}}{2}+3^{l-\delta+1}(j+r)}
\end{equation}
or
\begin{equation}\label{prop5case2}
u_n-u_m=v_{\frac{(a-b)\cdot 3^{l-\delta}}{2}+3^{l-\delta+1}(j-r)}u_{\frac{(a+b)\cdot 3^{l-\delta}}{2}+3^{l-\delta+1}(j+r)},
\end{equation}
depending on the parity of $\frac{(a-b)\cdot 3^{l-\delta}}{2}+3^{l-\delta+1}(j-r)$. First suppose that $a=b$. Then \eqref{prop5case1} and \eqref{prop5case2} simplify down to
\begin{equation}\label{prop5case1part2}
u_n-u_m=u_{3^{l-\delta+1}(j-r)}v_{a\cdot 3^{l-\delta}+3^{l-\delta+1}(j+r)}
\end{equation}
or
\begin{equation}\label{prop5case2part2}
u_n-u_m=v_{3^{l-\delta+1}(j-r)}u_{a\cdot 3^{l-\delta}+3^{l-\delta+1}(j+r)}.
\end{equation}
Suppose \eqref{prop5case1part2} holds. Then $j-r$ is even, and we can deduce that $3^k\mid u_n-u_m$ if and only if $3^{k-2l-1}\mid j-r$. Suppose \eqref{prop5case2part2}. Then $j-r$ is odd, and we can deduce that $3^k\mid u_n-u_m$ if and only if $3^{k-2l-1}\mid j-r$. Combining both of these possibilities gives $3^l$ possible values of $m$ with $a=b$. The case of $a\neq b$ is similar also leading to $3^l$ possible values of $m$ so Statement 1) follows. Statement 2) follows similarly, using \eqref{lem2.4}, \eqref{lem3.3}, \eqref{lem3.4}, and Lemma \ref{lem13}.
\end{proof}
\begin{lemma}\label{lem14}
Let $k\geq\delta$ and $0\leq n\leq 2\cdot 3^{k-\delta}-1$ with $n$ even ($n$ odd, respectively) and suppose $u_n\equiv b\pmod{3^k}$ ($v_n\equiv b\pmod{3^k}$, respectively), where $0\leq b\leq 3^k-1$. Then $u_{n+2\cdot 3^{k-\delta}j}$ ($v_{n+2\cdot 3^{k-\delta}j}$, respectively), $j=0,1,2$ are congruent to $b+3^k\lambda$, $\lambda=0,1,2$, modulo $3^{k+1}$ in some order.
\end{lemma}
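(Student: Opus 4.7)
The plan is to mirror the proof of Lemma \ref{lem9}, substituting the identities and valuation formulas appropriate to the case $3\mid P$. Fix $0\leq i<j\leq 2$ and set $s=n+2\cdot 3^{k-\delta}j$, $t=n+2\cdot 3^{k-\delta}i$, so that $s$ and $t$ share the parity of $n$; since $3^{k-\delta}$ is odd, $(s-t)/2=3^{k-\delta}(j-i)$ has the same parity as $j-i$. The aim is to show that $3^k\mathrel\Vert u_s-u_t$ (respectively $3^k\mathrel\Vert v_s-v_t$) for each of the three pairs $(i,j)$, which forces the three relevant values to be pairwise incongruent modulo $3^{k+1}$. Since each reduces to $b$ modulo $3^k$, they must then exhaust $\{b,b+3^k,b+2\cdot 3^k\}$ modulo $3^{k+1}$ in some order.

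For the $u$-statement I would split on the parity of $j-i$. When $j-i$ is even I would invoke \eqref{lem2.2} to write $u_s-u_t=u_{3^{k-\delta}(j-i)}v_{n+3^{k-\delta}(j+i)}$; here \eqref{lem10.2} gives $\ord_3\left(u_{3^{k-\delta}(j-i)}\right)=k$, while $n+3^{k-\delta}(j+i)$ is even so the $v$-factor is a unit modulo $3$ by \eqref{lem10.3}. When $j-i$ is odd I would instead use \eqref{lem2.1} to get $u_s-u_t=u_{n+3^{k-\delta}(j+i)}v_{3^{k-\delta}(j-i)}$; now $n+3^{k-\delta}(j+i)$ is odd so the $u$-factor is a unit by \eqref{lem10.1}, and \eqref{lem10.4} gives $\ord_3\left(v_{3^{k-\delta}(j-i)}\right)=k$ because $3\nmid(j-i)$.

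The argument for the $v$-statement with $n$ odd is parallel, using parts \eqref{lem2.3} and \eqref{lem2.4} of Lemma \ref{lem2}. When $j-i$ is even, \eqref{lem2.4} produces $v_s-v_t=Du_{n+3^{k-\delta}(j+i)}u_{3^{k-\delta}(j-i)}$; since $3\mid P$ one has $D\equiv 1\pmod 3$, the first $u$-factor has odd argument and is a unit by \eqref{lem10.1}, and the second contributes $3^k$ by \eqref{lem10.2}. When $j-i$ is odd, \eqref{lem2.3} yields $v_s-v_t=v_{n+3^{k-\delta}(j+i)}v_{3^{k-\delta}(j-i)}$; the first $v$-factor has even argument and is a unit by \eqref{lem10.3}, while the second contributes $3^k$ by \eqref{lem10.4}.

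The only real obstacle is bookkeeping: one must select the right identity from Lemma \ref{lem2} according to the parity of $j-i$ and verify that in each of the four subcases exactly one factor absorbs the full $3^k$ while the complementary factor is a $3$-adic unit. All the valuation inputs themselves are immediate from Lemma \ref{lem10}, so no genuinely new computation is needed beyond that case analysis.
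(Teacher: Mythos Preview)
Your proposal is correct and follows essentially the same approach as the paper's proof: both split according to the parity of $3^{k-\delta}(j-i)$ (equivalently of $j-i$), select the appropriate identity from Lemma~\ref{lem2}, and then read off the exact $3$-adic valuation from Lemma~\ref{lem10} to conclude $3^k\mathrel\Vert u_s-u_t$ (resp.\ $3^k\mathrel\Vert v_s-v_t$). Your bookkeeping of which factor is a unit and which carries the full $3^k$ is accurate in all four subcases.
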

\begin{proof}
By Lemma \ref{lem2} we have either
\begin{equation}\label{lem14case1}
u_{n+2\cdot 3^{k-\delta}j}-u_{n+2\cdot 3^{k-\delta}i}=u_{3^{k-\delta}(j-i)}v_{n+3^{k-\delta}(j+i)}
\end{equation}
or
\begin{equation}\label{lem14case2}
u_{n+2\cdot 3^{k-\delta}j}-u_{n+2\cdot 3^{k-\delta}i}=v_{3^{k-\delta}(j-i)}u_{n+3^{k-\delta}(j+i)}
\end{equation}
and either
\begin{equation}\label{lem14case3}
v_{n+2\cdot 3^{k-\delta}j}-v_{n+2\cdot 3^{k-\delta}i}=Du_{3^{k-\delta}(j-i)}u_{n+3^{k-\delta}(j+i)}
\end{equation}
or
\begin{equation}\label{lem14case4}
v_{n+3^{k-\delta}j}-v_{n+3^{k-\delta}i}=v_{3^{k-\delta}(j-i)}v_{n+3^{k-\delta}(j+i)}
\end{equation}
for all pairs of integers $0\leq i<j\leq 2$. First suppose that $n$ is even. Suppose that \eqref{lem14case1} holds. Then $3^{k-\delta}(j-i)$ is even so $n+3^{k-\delta}(j+i)$ is also even. Hence $3\nmid v_{n+3^{k-\delta}(j+i)}$ by \eqref{lem10.3}. Also, $3^{k-\delta}\mathrel\Vert 3^{k-\delta}(j-i)$, so that $3^{k}\mathrel\Vert u_{3^{k-\delta}(j-i)}$. Therefore, $3^k\mathrel\Vert u_{n+2\cdot 3^{k-\delta}j}-u_{n+2\cdot 3^{k-\delta}i}$. Suppose that \eqref{lem14case2} holds. Then $3^{k-\delta}(j-i)$ is odd so $n+3^{k-\delta}(j+i)$ is also odd. Hence $3\nmid u_{n+3^{k-\delta}(j+i)}$ by \eqref{lem10.1}. Also, $3^{k-\delta}\mathrel\Vert 3^{k-\delta}(j-i)$, so that $3^{k}\mathrel\Vert v_{3^{k-\delta}(j-i)}$. Therefore, $3^k\mathrel\Vert u_{n+2\cdot 3^{k-\delta}j}-u_{n+2\cdot 3^{k-\delta}i}$. The result on $n$ being even follows. The case of $n$ being odd is the same.
\end{proof}
\begin{prop}\label{prop6}
For all $k\geq\delta$, if $b\equiv 0\pmod{3^{\delta}}$, then $v_u\left(3^k,b\right)=v_v\left(3^k,b\right)=2$.
\end{prop}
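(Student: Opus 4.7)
The plan is to prove the claim by induction on $k \geq \delta$, with the base case $k = \delta$ handled by direct inspection and the inductive step driven by Lemma \ref{lem14} as the lifting engine.

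For the base case $k = \delta$, the period length is $h_u(3^\delta) = h_v(3^\delta) = 2$, so it suffices to check $n \in \{0,1\}$ against the congruence $b \equiv 0 \pmod{3^\delta}$. Since $u_0 = 0$, $u_1 = 1$, $v_0 = 2$, and $v_1 = P \equiv 0 \pmod{3^\delta}$, the base-case count is immediate from enumeration.

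For the inductive step from $k$ to $k+1$, I fix any $b' \in \{0, \ldots, 3^{k+1}-1\}$ with $b' \equiv 0 \pmod{3^\delta}$ and set $b := b' \bmod 3^k$. If $n$ lies in the $3^k$-period and satisfies $u_n \equiv b \pmod{3^k}$, then $u_n \equiv 0 \pmod{3}$, so by \eqref{lem10.1} the index $n$ is even, and Lemma \ref{lem14} applies. Its content is that the three indices $n$, $n + 2 \cdot 3^{k-\delta}$, $n + 4 \cdot 3^{k-\delta}$, all of which lie in the $3^{k+1}$-period because $h_u(3^{k+1}) = 3\, h_u(3^k)$, yield $u$-values realizing the three lifts $b$, $b + 3^k$, $b + 2 \cdot 3^k$ of $b$ modulo $3^{k+1}$ in some order, exactly one of which equals $b'$. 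Since these triples partition the $3^{k+1}$-period as $n$ ranges over the $3^k$-period, summing yields $v_u(3^{k+1}, b') = v_u(3^k, b)$, closing the induction. The argument for $v_v$ is identical after replacing \eqref{lem10.1} by \eqref{lem10.3} to force $n$ odd in place of even.

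The only real obstacle is verifying the parity constraint required to invoke Lemma \ref{lem14}; this is immediate from Lemma \ref{lem10}. Once the parity is established, the triple-partition of the $3^{k+1}$-period is clean, and the induction closes essentially mechanically, with no additional ideas beyond the preceding lemmas.
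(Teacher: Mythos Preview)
Your argument is correct and follows essentially the same route as the paper: verify the base case $k=\delta$ by direct inspection of the length-$2$ period modulo $3^\delta$, then induct via Lemma~\ref{lem14} (with the parity hypothesis supplied by Lemma~\ref{lem10}) to show the count is preserved when passing from $3^k$ to $3^{k+1}$. Note that both your enumeration and the paper's actually yield the value $1$ rather than $2$, in agreement with Theorem~\ref{thm2}; the ``$=2$'' in the stated proposition is evidently a typo for ``$=1$''.
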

\begin{proof}
First, using the fact that $3^\delta\mid P$ we can see that $u_n\equiv 0\pmod{3^{\delta}}$ if $n$ is even, $u_n\equiv 1\pmod{3^{\delta}}$ if $n$ is odd, $v_n\equiv 0\pmod{3^{\delta}}$ if $n$ is odd, and $v_n\equiv 2\pmod{3^{\delta}}$ if $n$ is even. Hence, the result holds for $k=\delta$. Going through an induction argument using Lemma \ref{lem14} gives the result for all $k\geq\delta$. 
\end{proof}
We now prove Theorem \ref{thm2}.
\newline
\newline
1) In this case, the result follows from Propositions \ref{prop4}, \ref{prop5}, \ref{prop6}, as long as we show that all odd indices are accounted for in the first two lines of \eqref{thm2case1u} and all even indices are accounted for in the first two lines of \eqref{thm2case1v}. Let $S$ be the set of residues that are accounted for by the first two lines of \eqref{thm2case1u}. By Propositions \ref{prop4} and \ref{prop5} we have
\begin{align*}
&\quad\sum_{b\in S}v_u\left(3^k,b\right)\\
&=3^{\left\lfloor k/2\right\rfloor}+\sum_{l=\delta}^{\left\lfloor\frac{k-1}{2}\right\rfloor}2\cdot 3^l\cdot 3^{k-2l-\delta}\\
&=3^{\left\lfloor k/2\right\rfloor}+2\cdot 3^{k-1}\sum_{l=\delta}^{\left\lfloor\frac{k-1}{2}\right\rfloor}3^{-l}\\
&=3^{k-\delta}.
\end{align*}
Since there are exactly $3^{k-\delta}$ even integers $n$ such that $0\leq n\leq 2\cdot 3^{k-\delta}-1$ \eqref{thm2case1u} follows. \eqref{thm2case1v} can be argued similarly.
\newline
\newline
2) Notice that for all $n\in\mathbb{N}$ we have $u_{4n+1}-u_1=u_{2n}v_{2n+1}$ by \eqref{lem2.2}. By \eqref{lem10.2} and \eqref{lem10.4} we have $3^{\delta}\mid u_{2n}$ and $3^{\delta}\mid v_{2n+1}$. Thus $3^{2\delta}\mid u_{4n+1}-u_1$, so that $u_{4n+1}\equiv 1\pmod{3^k}$. We can similarly derive that $u_{4n+3}\equiv u_1\pmod{3^k}$, using \eqref{lem2.1}, \eqref{lem10.2}, and \eqref{lem10.4}. By these observations and Proposition \ref{prop6} we have \eqref{thm2case2u}. \eqref{thm2case2v} follows similarly.
\section{Future Work}
There are still many questions left unanswered. For instance what can be said about Lucas sequences with recurrence relation $u_n=Pu_{n-1}+Qu_{n-2}$, where $Q\neq 1$? This problem appears to be more challenging because the analogous equations in Lemma \ref{lem2} in this more general setting do not offer a direct way to calculate the $p$-adic valuation of the difference between two terms in the sequence for a given prime $p$ (except for the case of $Q=-1$, which we leave as an exercise to the reader). Also, while there is a lot of progress on the residues modulo prime powers of second-order linear recurrence questions there is considerably less research on the analogous questions for higher order sequences. In fact, it was only in the last few years that real progress was made on the $p$-adic valuation of the terms themselves in higher order sequences \cite{antony},\cite{bilu},\cite{bravo},\cite{guadalupe},\cite{irmak},\cite{marques}. Again, the problem here is that there is no analogous result to Lemma \ref{lem2} for higher order sequences so finding some alternative approach will be necessary here too.

\section{Acknowledgements}

This paper was supported by MT Internal Grant Opportunities from Middle Tennessee State University.


\begin{thebibliography}{0}
\bibitem{antony}Antony, Deepa, and Rupam Barman. ``On the p-adic valuation of third order linear recurrence sequences." \textit{Results in Mathematics} 79.8 (2024): 276.
\bibitem{ballot}Ballot, Christian J-C., and Hugh C. Williams. The Lucas Sequences. Springer Cham, 2023.
\bibitem{bilu}Bilu, Yuri, Florian Luca, Joris Nieuwveld, Jo\"{e}l Ouaknine, and James Worrell. ``On the p-adic zeros of the Tribonacci sequence." \textit{Mathematics of Computation} 93.347 (2024): 1333-1353.
\bibitem{bumby}Bumby, Richard T. ``A distribution property for linear recurrence of the second order." \textit{Proceedings of the American Mathematical Society} 50.1 (1975): 101-106.
\bibitem{bravo}Bravo, Jhon Jairo, Maribel D\'{i}az, and Jos\'{e} Luis Ram\'{i}rez. ``The $2 $-adic and $3 $-adic valuation of the Tripell sequence and an application." Turkish Journal of Mathematics 44.1 (2020): 131-141.
\bibitem{bundschuh2}Bundschuh, Peter, and Ralf Bundschuh. ``The sequence of Lucas numbers is not stable modulo $2$ and $5$." \textit{Uniform Distribution Theory} 5.2 (2010): 113-130.
\bibitem{bundschuh3}Bundschuh, Peter, and Ralf Bundschuh. ``Distribution of Fibonacci and Lucas numbers modulo $3^k$." \textit{The Fibonacci Quarterly} 49.3 (2011): 201-210.
\bibitem{carlip}Carlip, Walter, and Eliot Jacobson. ``Unbounded stability of two-term recurrence sequences modulo $2^k$." \textit{Acta Arithmetica} 74.4 (1996): 329-346.
\bibitem{carlip3}Carlip, Walter, and Eliot Jacobson. "Stability of two-term recurrence sequences with even parameter." \textit{Finite Fields and Their Applications} 3.1 (1997): 70-83.
\bibitem{carlip2} Carlip, Walter, and Lawrence Somer. ``Stability classes of second-order linear recurrences modulo $2^k$ II." \textit{The Rocky Mountain Journal of Mathematics} 40.1 (2010): 85-116.
\bibitem{carroll}Carroll, Dana, Eliot Jacobson, and Somer Lawrence. ``Distribution of two-term recurrence sequences mod $p^e$." \textit{The Fibonacci Quarterly} 32.3 (1994): 260-265.
\bibitem{guadalupe}Guadalupe, Russelle. ``On the $3$-adic valuation of the Narayana numbers." arXiv preprint arXiv:2112.06187 (2021).
\bibitem{irmak}Irmak, Nurettin. ``On factorials in Perrin and Padovan sequences." \textit{Turkish Journal of Mathematics} 43.5 (2019): 2602-2609.
\bibitem{jacobson}Jacobson, Eliot T. ``Distribution of the Fibonacci numbers mod $2^k$." \textit{The Fibonacci Quarterly} 30.3 (1992): 211-215.
\bibitem{lucas}Lucas, Edouard. ``Th\'{e}orie des fonctions num\'{e}riques simplement p\'{e}riodiques.[continued]." \textit{American Journal of Mathematics} 1.3 (1878): 197-240.
\bibitem{marques}Marques, Diego, and Tam\'{a}s Lengyel. ``The 2-adic Order of the Tribonacci Numbers and the Equation $T_n= m!$." \textit{Journal of Integer Sequences} 17.2 (2014): 3.
\bibitem{morgan}Morgan, Mark D. ``The distribution of second order linear recurrence sequences mod $2^m$." \textit{Acta Arithmetica} 83.2 (1998): 181-195.
\bibitem{somer}Somer, Lawrence, and Walter Carlip. "Stability of second-order recurrences modulo $p^r$." \textit{International Journal of Mathematics and Mathematical Sciences} 23.4 (2000): 225-241.
\bibitem{ward}Ward, Morgan. ``The arithmetical theory of linear recurring series." \textit{Transactions of the American Mathematical Society} 35.3 (1933): 600-628.
\end{thebibliography}
\end{document}